\newcommand{\doi}[1]{\url{http://dx.doi.org/#1}}
\newcommand{\p}{\partial}
\newcommand{\D}{\Delta}
\renewcommand{\phi}{\varphi}
\newcommand{\R}{{\mathbb R}}
\newcommand{\spde}{\textsc{spde}}
\newcommand{\sde}{\textsc{sde}}
\newcommand{\pde}{\textsc{pde}}
\newcommand{\rat}[2]{{\textstyle\frac{#1}{#2}}}
\newcommand{\Ord}[1]{\ensuremath{\mathcal O\big(#1\big)}}
\newcommand{\ord}[1]{\ensuremath{o\big(#1\big)}}
\newcommand{\Z}[2][]{e^{-#2 t#1}{\star}}
\newtheorem{theorem}{Theorem}
\newtheorem{lemma}{Lemma}
\title{Averaging approximation to singularly perturbed nonlinear stochastic wave equations}
\author{
Yan Lv\thanks{School of Science, Nanjing University of Science \&
Technology, Nanjing, 210094, \textsc{China}.
\protect\url{mailto:lvyan1998@yahoo.com.cn} }
\and
A.~J. Roberts\thanks{School of Mathematics, University of Adelaide, South Australia, \textsc{Australia}. 
\protect\url{mailto:anthony.roberts@adelaide.edu.au}}
}
\date{\today}
\begin{document}

\maketitle

\begin{abstract}
An averaging method is applied to derive effective approximation to
the following singularly perturbed nonlinear stochastic damped wave
equation
\begin{equation*}
 \nu u_{tt}+u_t=\D u+f(u)+\nu^\alpha\dot{W}
\end{equation*}
on an open bounded domain $D\subset\R^n$\,, $1\leq n\leq 3$\,. Here
$\nu>0$ is a small parameter characterising the singular perturbation,
and $\nu^\alpha$\,, $0\leq \alpha\leq 1/2$\,, parametrises the strength of the noise. Some scaling transformations and the martingale representation theorem yield the following effective approximation for small~$\nu$,
\begin{equation*}
u_t=\D u+f(u)+\nu^\alpha\dot{W} 
\end{equation*}
to an error of $\ord{\nu^\alpha}$\,.
\end{abstract}

\paragraph{Keywords} stochastic nonlinear
wave equations, averaging, tightness,  martingale.

\paragraph{Mathematics Subject Classifications (2000)} 60F10, 60H15,
35Q55.


\section{Introduction}\label{sec:intro}


Wave motion is one of the most commonly observed physical phenomena,
and typically described by hyperbolic partial differential equations. Nonlinear wave equations also have been
studied a great deal in many modern problems such as sonic
booms, bottlenecks in traffic flows, nonlinear optics and quantum
field theory~\cite[e.g.]{ReedSimon, White}. However, for many problems, such
as wave propagation through the atmosphere or the ocean, the
presence of turbulence causes random fluctuations.  More
realistic models must account for such random fluctuations.
Hence we study stochastic wave
equations~\cite[e.g.]{Chow02, ChowB}.

Here we study an effective approximation,  in the sense of
distribution, for the following nonlinear wave stochastic partial differential equation~(\spde).  The \spde\ is a singularly perturbed problem on a bounded open domain $D\subset\R^n$,
$1\leq n\leq3$\,,
\begin{equation}\label{e:SWE}
 \nu u^\nu_{tt}+u^\nu_t=\D u^\nu+f(u^\nu)+\nu^\alpha\dot{W}\,, \quad
 u^\nu(0)=u_0\,,\quad u^\nu_t(0)=u_1\,,
\end{equation}
with zero Dirichlet  boundary on~$D$. Here~$\nu^\alpha$ with
$0<\nu\leq1$ and $0\leq\alpha\leq1/2$ parametrises the strength of noise,
and $\D$~is the Laplace operator in~$\R^n$. The noise~$W(t)$ is an infinite
dimensional Q-Wiener process which is detailed in section~\ref{sec:basic}.  The \spde~(\ref{e:SWE}) also describes the
motion of a small particle with mass~$\nu$ and an infinite number of degrees of freedom~\cite{CF05, CF06}. We are concerned with the effective
approximation of the solution to the \spde~(\ref{e:SWE}) for small $\nu>0$\,.

For $\alpha=1/2$\,, the limit of the random dynamics of \spde~(\ref{e:SWE}) as $\nu\rightarrow 0$ has been studied by Lv and
Wang~\cite{LW08, WL10}. The random attractor and measure attractor
of \spde~(\ref{e:SWE}) are approximated by those of the
deterministic \pde
\begin{equation}\label{1HE}
u_t=\Delta u+f(u)
\end{equation}
as $\nu\rightarrow 0$ in the almost sure sense \cite{LW08} and weak
topology \cite{WL10} respectively.

The important case of $\alpha=0$\,, which is an infinite
dimensional version of the Smolukowski--Kramers approximation, is
studied  by analysing  the structure of solution of linear
stochastic wave equations~\cite{CF05, CF06}. For any $T>0$\,, the
solution~$u(t)$ to the \spde~(\ref{e:SWE}) is approximated in
probability by that of the stochastic system
\begin{equation*}
u_t=\Delta u+f(u)+\dot{W}
\end{equation*}
as $\nu\rightarrow 0$ in space $C(0, T; L^2(D))$.

Here we extend the approximating result to the case when $0\leq
\alpha\leq 1/2$ and derive a higher order approximation in the sense
of distribution. Recently, the stochastic averaging approach was developed
to study the effective approximation to slow-fast \textsc{spde}s
\cite{CerFre09, WangRoberts08} in the following form
\begin{eqnarray*}
u^\nu_t&=&\Delta u^\nu+f(u^\nu, v^\nu)+ \sigma_1\dot{W}_1\,,\\
v^\nu_t&=&\frac{1}{\nu}\left[\Delta
v^\nu+g(u^\nu,v^\nu)\right]+\frac{\sigma_2}{\sqrt{\nu}}\dot{W}_2 \,,
\end{eqnarray*}
where $f$ and~$g$ are nonlinear terms, $\sigma_1$~and~$\sigma_2$ are some
constants, and $W_1$~and~$W_2$ are Wiener processes. Notice that upon introducing
$v^\nu=u^\nu_t$, the \spde~(\ref{e:SWE}) is rewritten as
\begin{eqnarray*}
u^\nu_t&=&v^\nu,\quad  u^\nu(0)=u_0\,,\\
v^\nu_t&=&\frac{1}{\nu}\left[-v^\nu+\Delta
u^\nu+f(u^\nu)\right]+\frac{1}{\nu^{1-\alpha}}\dot{W},\quad
v^\nu(0)=u_1\,,
\end{eqnarray*}
which are also in the form of slow-fast \textsc{spde}s. Then we can
follow the stochastic averaging approach to derive an effective
averaging approximation of~$u^\nu$, the solution of \spde~(\ref{e:SWE}) as
$\nu\rightarrow 0$ for all $0\leq \alpha \leq 1/2$\,. Here the case
$\alpha=1/2$ is the most important case because  all cases of
$\alpha\in [0,1/2]$ can be transformed to the case $\alpha=1/2$\,, see
section~\ref{sec:alpha=0} and section~\ref{sec:alpha=1}.  By an
averaging approach and martingale representation theorem, we prove that
for small $\nu>0$ with $0\leq \alpha\leq 1/2$ the solution of \spde~(\ref{e:SWE}) is approximated in the sense of distribution by~$\bar{u}^\nu$ which solves
\begin{equation}\label{e:1-bar-u}
\bar{u}^\nu_t=\Delta \bar{u}^\nu+f(\bar{u}^\nu)+\nu^\alpha
\dot{\bar{W}}\,, \quad  \bar{u}^\nu(0)=u_0 \,,
\end{equation}
where $\bar{W}(t)$ is a Wiener process distributes same as~$W(t)$.
This result shows that for any small $\nu>0$ with $0\leq \alpha\leq
1/2$ the term~$\nu u^\nu_t(t)$ is a higher order term than the
random force term~$\nu^\alpha W(t)$.

Section~\ref{sec:sqrt-nu} gives the approximation for the important
case that $\alpha=1/2$\,. Previous research~\cite{LW08} gives an
approximation which is a deterministic equation. However, our new
approximation shows that for small $\nu\neq 0$\,, there is a small
fluctuation which  distributes same as~$\sqrt{\nu}W(t)$, see~(\ref{e:1-bar-u}). This gives a more effective approximation.

Section~\ref{ssmcf} explores a parameter regime where a nonlinear coordinate transformation underlies the existence of a stochastic slow manifold for the case $\alpha=0$.   The stochastic slow manifolds of both the \spde~\eqref{e:SWE} and the model~\eqref{e:1-bar-u}  have the same evolution in the parameter regime and so provide evidence of the stronger result of pathwise approximation therein.

\section{Preliminaries}\label{sec:basic}

Let $D\subset \R^n$\,, $1\leq n\leq 3$\,, be a regular domain with
boundary~$\Gamma$. Denote by~$L^2(D)$ the Lebesgue space of square
integrable real valued functions on~$D$, which is a Hilbert space
with inner product
\begin{equation*}
\langle u, v\rangle=\int_Du(x)v(x)\,dx\,, \quad u, v\in L^2(D)\,.
\end{equation*}
Write the norm on~$L^2(D)$ by $\|u\|_0=\langle u, u\rangle^{1/2}$\,.
Define the following abstract operator
\begin{equation*}
Au=-\D u\,,\quad u\in \text{Dom}(A)=\{u\in L^2(D): A u\in L^2(D)\,,\
u|_{\Gamma}=0\}\,.
\end{equation*}
Denote by~$\{\lambda_k\}$ the eigenvalues of~$A$ with
$0<\lambda_1\leq \lambda_2\leq \cdots\leq\lambda_k\leq \cdots$\,,
$\lambda_k\rightarrow\infty$ as $k\rightarrow\infty$\,. For any
$s\in\R$\,,  introduce the space $H^{s}_0(D)=\text{Dom}(A^{s/2})$
endowed with the norm
\begin{equation*}
\|u\|_s=\|A^{s/2}u\|_0\,, \quad u\in H^s_0(D).
\end{equation*}
 Consider the following singularly perturbed
stochastic wave equation with cubic nonlinearity on~$D$:
\begin{eqnarray}\label{SWE1}
   \nu u^\nu_{tt}+u^\nu_t&=&\D u^\nu+\beta u^\nu-(u^\nu)^3+\nu^\alpha\dot{W}(t),\\
   u^\nu(0)&=&u_0\,,\quad  u^\nu_t(0)=u_1\,,\label{SWE2}\\
   u^\nu|_\Gamma&=&0\,,\label{SWE3}
\end{eqnarray}
with $0<\nu<1$ and $0\leq \alpha\leq 1/2$\,. Here $\{W(t)\}_{t\in \R}$ is
an $L^2(D)$-valued two sided Wiener process defined on a complete
probability space ($\Omega,\mathcal{F},\{\mathcal{F}_t\}_{t\geq 0},\mathbb{P}$) with covariance
operator~$Q$ such that
\begin{equation*}
Qe_k=b_k e_k\,, \quad  k=1,2,\ldots\,,
\end{equation*}
where $\{e_k\}$ is a complete orthonormal system in~$L^2(D)$, $b_k$~is a bounded sequence of non-negative real numbers. Then
\begin{equation*}
W(t)=\sum_{k=1}^\infty \sqrt{b_k} e_k w_k(t),
\end{equation*}
where $w_k$ are real mutually independent Brownian
motions~\cite{PZ92}. Further, we assume
\begin{equation}\label{Q}
B_0=\sum^\infty_{k=1} b_k<\infty
\quad\text{and}\quad
B_1=\sum^\infty_{k=1}\lambda_k b_k<\infty\,.
\end{equation}
 Then by
a standard method \cite{Chow07}, for any $(u_0, u_1)\in
  H^{s+1}_0(D)\times H^s(D)$, $s\in\R$\,,
there is a unique solution $u^\nu$ to~(\ref{SWE1})--(\ref{SWE3}),
\begin{eqnarray}\label{sol1}
&&u^\nu\in L^2(\Omega, C(0, T;   H_0^{s+1}(D))), \\
&&u^\nu_t\in L^2(\Omega, C(0, T; H^s(D))).\label{sol2}
\end{eqnarray}
In the following we write $f(u)=\beta u-u^3$ and
$F(u)=\int_0^uf(r)\,dr$\,.

For our purpose we need the following lemma.

\begin{lemma}[Simon~\cite{Simon}]\label{embedding}
Assume $E$, $E_0$ and~$E_1$ be Banach spaces such that $E_1\Subset
E_0$\,, the interpolation space $(E_0, E_1)_{\theta,1}\subset E$ with
$\theta\in (0, 1)$  and $E\subset E_0$ with $\subset$~and~$\Subset$
denoting continuous and compact embedding respectively. Suppose
$p_0,p_1\in [1,\infty]$ and $T>0$\,, such that
\begin{eqnarray*}&&
\mathcal{V} \text{ is a bounded set in } L^{p_1}(0, T;
E_1),\quad\text{and}
\\&&
\p\mathcal{V}:=\{\p v: v\in \mathcal{V}\} \text{ is
a bounded set in } L^{p_0}(0, T; E_0).
\end{eqnarray*}
Here $\p$ denotes the distributional derivative. If
$1-\theta>1/p_\theta$ with
 \begin{equation*}
\frac{1}{p_\theta}=\frac{1-\theta}{p_0}+\frac{\theta}{p_1}\,,
 \end{equation*}
then $\mathcal{V}$ is relatively compact in~$C(0, T; E)$.
\end{lemma}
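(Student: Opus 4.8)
My plan is to obtain the conclusion from the vector-valued Arzelà--Ascoli theorem: a subset of $C(0,T;E)$ is relatively compact if and only if it is uniformly equicontinuous as a family of maps $[0,T]\to E$ and its sections $\{v(t):v\in\mathcal V\}$ are relatively compact in~$E$ for each~$t$. Since the two hypotheses control values in $E_1$ but the time derivative only in~$E_0$, the basic tool is an Ehrling-type interpolation inequality: from $E_1\Subset E_0$ and $(E_0,E_1)_{\theta,1}\subset E$ one has, for every $\eta>0$, a constant $C_\eta$ with
\begin{equation*}
\|w\|_E\le\eta\|w\|_{E_1}+C_\eta\|w\|_{E_0},\qquad w\in E_1,
\end{equation*}
together with the interpolation bound $\|w\|_E\le C\|w\|_{E_1}^{\theta}\|w\|_{E_0}^{1-\theta}$; a by-product of the same compactness is that $E_1\Subset E$.

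First I would read off equicontinuity in the weak space~$E_0$ from the bound on $\p\mathcal V$: for $v\in\mathcal V$ and $0\le s<t\le T$, Hölder's inequality gives
\begin{equation*}
\|v(t)-v(s)\|_{E_0}\le\int_s^t\|\p v(\tau)\|_{E_0}\,d\tau\le\|\p v\|_{L^{p_0}(0,T;E_0)}\,(t-s)^{1-1/p_0}.
\end{equation*}
The exponent is positive precisely when $p_0>1$, and the hypothesis $1-\theta>1/p_\theta=(1-\theta)/p_0+\theta/p_1$ indeed forces $p_0>1$, so $\mathcal V$ is uniformly Hölder into~$E_0$.

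The crux is to upgrade this to~$E$ without any pointwise $E_1$-bound, which is unavailable when $p_1<\infty$. I would mollify in time: writing $v^h(t)=\frac1h\int_t^{t+h}v$ for the Steklov average, Hölder gives $\|v^h(t)\|_{E_1}\le\|v\|_{L^{p_1}(E_1)}h^{-1/p_1}$, while the derivative bound gives $\|v^h(t)-v(t)\|_{E_0}=\mathcal O(h^{1-1/p_0})$. Comparing two dyadic scales, the difference $v^{h/2}(t)-v^{h}(t)$ is bounded by $Ch^{-1/p_1}$ in $E_1$ and by $Ch^{1-1/p_0}$ in $E_0$, so the interpolation inequality yields
\begin{equation*}
\|v^{h/2}(t)-v^{h}(t)\|_E\le C\big(h^{-1/p_1}\big)^{\theta}\big(h^{1-1/p_0}\big)^{1-\theta}=Ch^{\gamma},\qquad \gamma:=1-\theta-\tfrac1{p_\theta}>0.
\end{equation*}
Here the hypothesis $1-\theta>1/p_\theta$ is used exactly to make $\gamma$ positive, hence to make the telescoping series $\sum_k\big(v^{h_k/2}-v^{h_k}\big)$ over $h_k=2^{-k}h$ absolutely convergent in~$E$; summing it shows $\|v(t)-v^h(t)\|_E=\mathcal O(h^{\gamma})$ uniformly over~$\mathcal V$. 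This single estimate does everything: combined with the interpolation estimate $\|v^h(t)-v^h(s)\|_E=\mathcal O((t-s)^{\gamma})$ for the increment of the average (taking $h=t-s$) it gives uniform Hölder continuity of $\mathcal V$ into~$E$, and hence equicontinuity; and since for fixed~$h$ the averaged sections $\{v^h(t):v\in\mathcal V\}$ are bounded in~$E_1$ and so relatively compact in~$E$, the bound $\|v(t)-v^h(t)\|_E=\mathcal O(h^{\gamma})$ exhibits each section $\{v(t)\}$ as totally bounded in~$E$. Arzelà--Ascoli then delivers relative compactness in $C(0,T;E)$. I expect the dyadic telescoping --- replacing the naive and insufficient single-scale comparison of $v$ with $v^h$, which is controlled only in the weak norm~$E_0$, by a summable resolution across scales --- to be the main obstacle and the technical heart of the argument.
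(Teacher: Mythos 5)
Your argument is correct, and it is essentially the argument of Simon's paper itself (time-averaging at dyadic scales, the multiplicative interpolation inequality $\|w\|_E\le C\|w\|_{E_0}^{1-\theta}\|w\|_{E_1}^{\theta}$ for $(E_0,E_1)_{\theta,1}\subset E$, and Arzel\`a--Ascoli); the paper offers no proof of its own, quoting the lemma directly from Simon~\cite{Simon}, so there is no alternative route to compare against. The only loose ends in your write-up are routine: the forward Steklov average $v^h(t)=\frac1h\int_t^{t+h}v$ is undefined for $t$ near $T$ (extend $v$ past $T$ or switch to backward averages there), and one should first fix the continuous $E_0$-valued representative of $v$ so that the pointwise values and the identification of the telescoping limit with $v(t)-v^h(t)$ make sense.
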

In the following, for any $T>0$\,, we denote by~$C_T$ a generic
positive constant which is independent of~$\nu$.

\section{The case of  $\alpha=1/2$}\label{sec:sqrt-nu}

We first consider the special case of $\alpha=1/2$ which was recently
studied by a direct approximation method~\cite{LW08, WL10}.
Here we apply an averaging method to give  more effective
approximation to equation~(\ref{SWE1})--(\ref{SWE3}).  We rewrite~(\ref{SWE1})--(\ref{SWE3}) in the form of slow-fast
\textsc{spde}s:
\begin{eqnarray}
du^{\nu}&=&v^{\nu}\,dt\,,\quad u^\nu(0)=u_0\,,\label{3SWE1}\\
dv^{\nu}&=&-\frac{1}{\nu}\left[v^{\nu}-\D
u^{\nu}-f(u^{\nu})\right]dt+\frac{1}{\sqrt{\nu}}\,dW(t)\,, \quad v^\nu(0)=u_1\,.\label{3SWE2}
\end{eqnarray}
Notice that the slow part~$u^\nu$ and fast part~$v^\nu$ are linearly
coupled. For simplicity we consider $(u_0, u_1)\in (H^2(D)\cap
H_0^1(D))\times H^1(D)$. Then~(\ref{SWE1})--(\ref{SWE3}) has a unique
solution in $L^2(\Omega, C(0, T; (H^2(D)\cap H_0^1(D))\times
H^1(D)))$.

\subsection{Tightness of solutions}

Let $(u^{\nu},v^{\nu})$ be a solution to~(\ref{3SWE1})--(\ref{3SWE2}) with
$\nu>0$\,. In order to pass to the limit $\nu\rightarrow 0$ in the averaging
approach, we need some a priori estimates on the solutions.

\begin{theorem}\label{thm:estimate}
Assume $B_1<\infty$\,. For any $T>0$\,, there is a positive constant~$C_T$ such that
\begin{equation}
\mathbb{E}\left[\max_{0\leq t\leq T}\|u^\nu(t)\|_2^2+\max_{0\leq
t\leq T}\| v^\nu(t)\|^2_0 \right]\leq C_T \,,
\end{equation}
and for any integer $m>0$
\begin{equation*}
\mathbb{E}\int_0^T\|u^{\nu}(t)\|_1^{2m}dt\leq C_T\,.
\end{equation*}
\end{theorem}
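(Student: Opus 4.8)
The plan is to run two coupled energy estimates on the first‑order system~\eqref{3SWE1}--\eqref{3SWE2}, each obtained by Itô's formula, exploiting the \emph{exact} cancellation produced by the linear coupling, and then to close the nonlinear terms by Sobolev interpolation against the lower‑order bounds. First I would introduce the basic energy $V(t)=\frac{\nu}{2}\|v^\nu\|_0^2+\frac12\|u^\nu\|_1^2-\int_D F(u^\nu)\,dx$ and apply Itô's formula. Because $du^\nu=v^\nu\,dt$, the cross terms $\pm\langle Au^\nu,v^\nu\rangle$ and $\pm\langle f(u^\nu),v^\nu\rangle$ cancel, leaving the clean identity $dV=\bigl(-\|v^\nu\|_0^2+\tfrac12 B_0\bigr)\,dt+\sqrt\nu\,\langle v^\nu,dW\rangle$, the Itô correction being $\tfrac12\operatorname{tr}Q=\tfrac12B_0$. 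Since $F(u)=\tfrac\beta2u^2-\tfrac14u^4$, the functional is coercive: Young's inequality absorbs $\tfrac\beta2\|u^\nu\|_0^2$ into $\tfrac14\|u^\nu\|_{L^4}^4$, so that $V\ge\tfrac\nu2\|v^\nu\|_0^2+\tfrac12\|u^\nu\|_1^2+\tfrac18\|u^\nu\|_{L^4}^4-C$. Integrating, taking expectations, and using the Burkholder--Davis--Gundy inequality on the martingale term (whose quadratic variation is $\nu\int_0^t\|v^\nu\|_0^2\,ds$) yields, uniformly in~$\nu$, both $\mathbb{E}\int_0^T\|v^\nu\|_0^2\,ds\le C_T$ and $\mathbb{E}\max_{[0,T]}\bigl[\nu\|v^\nu\|_0^2+\|u^\nu\|_1^2\bigr]\le C_T$. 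Applying the same computation to $V^m$ (or taking $2m$‑th moments through BDG) gives $\mathbb{E}\max_{[0,T]}\|u^\nu\|_1^{2m}\le C_T$, whence the second assertion $\mathbb{E}\int_0^T\|u^\nu\|_1^{2m}\,dt\le T\,\mathbb{E}\max_{[0,T]}\|u^\nu\|_1^{2m}\le C_T$.

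Next I would differentiate the system once in space, equivalently work with $A^{1/2}u^\nu,A^{1/2}v^\nu$, using the higher energy $\mathcal{E}_1(t)=\frac{\nu}{2}\|v^\nu\|_1^2+\frac12\|u^\nu\|_2^2-\frac12\int_D f'(u^\nu)|\nabla u^\nu|^2\,dx$, which is coercive for $\|u^\nu\|_2^2$ modulo the already‑controlled $\|u^\nu\|_1^2$. The quadratic part again telescopes, and the top‑order nonlinear term $\langle Av^\nu,f(u^\nu)\rangle$ is cancelled exactly by the correction $\frac12\int f'(u^\nu)|\nabla u^\nu|^2$, whose time derivative supplies $\langle f(u^\nu),Av^\nu\rangle$. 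What survives is $d\mathcal{E}_1=\bigl(-\|v^\nu\|_1^2+\tfrac12B_1+3\int_D u^\nu v^\nu|\nabla u^\nu|^2\,dx\bigr)\,dt+\sqrt\nu\,\langle Av^\nu,dW\rangle$, the correction now being $\tfrac12\operatorname{tr}(A^{1/2}QA^{1/2})=\tfrac12B_1<\infty$. The main analytical obstacle is this surviving cubic term, which is energy‑supercritical for $n=3$ and superlinear in~$\mathcal{E}_1$. I would tame it by Hölder together with $H^1_0\hookrightarrow L^6$ and $\nabla u^\nu\in L^6$ (from $u^\nu\in H^2$), which gives $\bigl|\int_D u^\nu v^\nu|\nabla u^\nu|^2\bigr|\le C\|v^\nu\|_0\|u^\nu\|_1\|u^\nu\|_2^2\le C\|v^\nu\|_0\|u^\nu\|_1\,\mathcal{E}_1$, the decisive point being that $\|u^\nu\|_1$ is \emph{already} controlled by the first step, so the nonlinearity is effectively linear in~$\mathcal{E}_1$ with an integrable random coefficient.

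Since $\|v^\nu\|_0\|u^\nu\|_1$ is integrable in time by Cauchy--Schwarz together with the Step~1 bounds, a stochastic Gronwall argument --- run pathwise up to a stopping time and then released by uniform integrability, with BDG for the martingale --- yields $\mathbb{E}\max_{[0,T]}\|u^\nu\|_2^2\le C_T$. It remains to upgrade $\nu\|v^\nu\|_0^2$ to $\|v^\nu\|_0^2$ inside the supremum. Here the energy functional is useless, as it only sees $\nu\|v^\nu\|_0^2$, so I would instead use the Ornstein--Uhlenbeck structure of the fast equation: writing $v^\nu=z^\nu+w^\nu$ with $z^\nu$ the stationary convolution $\nu^{-1/2}\int_0^t e^{-(t-s)/\nu}\,dW(s)$ and $w^\nu=e^{-t/\nu}u_1+\nu^{-1}\int_0^t e^{-(t-s)/\nu}\bigl(\D u^\nu+f(u^\nu)\bigr)\,ds$, one has $\mathbb{E}\|z^\nu(t)\|_0^2\le\tfrac12B_0$ uniformly in~$\nu$, while $w^\nu$ is controlled through the unit‑mass kernel $\nu^{-1}e^{-t/\nu}$ by $\sup_s\|\D u^\nu+f(u^\nu)\|_0$, itself bounded by Steps~1--2. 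I expect the genuine difficulty to lie precisely here: the supremum over $[0,T]$ of the rapidly decorrelating fast process $z^\nu$ is the only quantity not manifestly uniform in~$\nu$, and controlling it (via the factorization method, or BDG for stochastic convolutions) is the technical heart of the $v^\nu$ estimate.
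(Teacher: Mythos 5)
Your overall route coincides with the paper's: the $H^2$ bound on $u^\nu$ comes from energy estimates (the paper simply cites~\cite{WL10} for this), and the $\|v^\nu\|_0$ bound comes from the variation-of-constants formula
$v^{\nu}(t)=e^{-t/\nu}u_1+\nu^{-1}\int_0^te^{-(t-s)/\nu}[\Delta u^{\nu}+f(u^{\nu})]\,ds+\nu^{-1/2}\int_0^te^{-(t-s)/\nu}\,dW$
together with a maximal inequality for the stochastic convolution --- exactly your third step, and you are right that the uniformity in~$\nu$ of the supremum of the Ornstein--Uhlenbeck part is the delicate point (a blunt application of the Da Prato--Zabczyk maximal inequality with $\Phi=\nu^{-1/2}Q^{1/2}$ gives $TB_0/\nu$, so one must exploit the $e^{-(t-s)/\nu}$ decay, e.g.\ by factorization, as you indicate). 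Your first step (the energy $V=\frac{\nu}{2}\|v^\nu\|_0^2+\frac12\|u^\nu\|_1^2-\int_DF(u^\nu)$, the exact cancellation, the It\^o correction $\frac12B_0$, coercivity, and BDG) is correct and is what the cited energy estimate amounts to.

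The one step that does not close as written is the ``stochastic Gronwall argument'' for the $H^2$ energy. You reduce to
$d\mathcal{E}_1\le\bigl(C\|v^\nu\|_0\|u^\nu\|_1\,\mathcal{E}_1+\tfrac12B_1\bigr)dt+dM$,
and pathwise Gronwall then produces the factor $\exp\bigl(C\int_0^T\|v^\nu\|_0\|u^\nu\|_1\,ds\bigr)$ inside the expectation. Your Step~1 gives only an $L^1$ (indeed all polynomial moments) bound on $\int_0^T\|v^\nu\|_0\|u^\nu\|_1\,ds$, and neither that nor ``release by uniform integrability'' controls the expectation of this exponential; the standard stochastic Gronwall lemmas either require exponential moments of the random coefficient or deliver only $\mathbb{E}[\sup_t\mathcal{E}_1^p]$ for $p<1$, whereas the theorem claims $\mathbb{E}\max_t\|u^\nu\|_2^2\le C_T$. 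The gap is repairable --- because the noise is additive with trace-class covariance, an exponential-martingale estimate on the first-level energy yields $\mathbb{E}\exp\bigl(\delta\sup_t\|u^\nu\|_1^2+\delta\int_0^T\|v^\nu\|_0^2\,ds\bigr)<\infty$ for small~$\delta$ uniformly in~$\nu$, which (after absorbing part of the cubic term into $-\|v^\nu\|_1^2$) is what is actually needed --- but as stated this is a missing idea rather than a routine detail, and it is precisely the part of the proof the paper delegates to~\cite{WL10}.
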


\begin{proof}
The result on $\|u^\nu(t)\|_2$ is found by a simple energy
estimate~\cite{WL10}.  Now we give the estimate on~$\|v^\nu(t)\|_0$.
By equation~(\ref{3SWE2}),
\begin{eqnarray*}
v^{\nu}(t)&=&e^{-{t}/{\nu}}u_1
+\frac{1}{\nu}\int_0^te^{-({t-s})/{\nu}}\left[\D u^{\nu}(s)+
f(u^{\nu}(s))\right]ds
\\&&{}
+\frac{1}{\sqrt{\nu}}
\int_0^te^{-({t-s})/{\nu}}\,dW(s).
\end{eqnarray*}
Noticing assumption~(\ref{Q}), by the estimate on~$\|u^\nu(t)\|_2$ and maximal inequality of stochastic convolution
\cite[Lemma 7.2]{PZ92},
\begin{equation*}
\mathbb{E}\left[\max_{0\leq t\leq T}\|v^\nu(t)\|_0^2\right]\leq C_T
\end{equation*}
for some positive constant~$C_T$. The last inequality of the theorem
is obtained by the same method~\cite{WL10} and
Poincar\'{e} inequality. This completes the proof.
\end{proof}

Now by the above estimates and Lemma~\ref{embedding}, we have the following theorem.
\begin{theorem}
For any $T>0$, $\{\mathcal{L}(u^\nu)\}_{0<\nu\leq 1}$ the distribution of~$u^\nu$  is
tight in the space $C(0, T; H_0^1(D))$.
\end{theorem}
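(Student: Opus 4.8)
The plan is to combine the uniform-in-$\nu$ a priori bounds of Theorem~\ref{thm:estimate} with the deterministic compactness criterion of Lemma~\ref{embedding} and the definition of tightness. Recall that, since $u^\nu_t=v^\nu$ by~\eqref{3SWE1}, Theorem~\ref{thm:estimate} simultaneously controls $u^\nu$ and its time derivative: $\mathbb E\max_{0\le t\le T}\|u^\nu(t)\|_2^2\le C_T$ and $\mathbb E\max_{0\le t\le T}\|v^\nu(t)\|_0^2\le C_T$. A family of laws on the Polish space $C(0,T;H^1_0(D))$ is tight precisely when, for each $\eps>0$, there is a compact set $K_\eps\subset C(0,T;H^1_0(D))$ with $\sup_{0<\nu\le1}\PX(u^\nu\notin K_\eps)<\eps$. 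The compact sets will be supplied by Lemma~\ref{embedding}.

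First I would fix the triple of spaces $E_1=H^2(D)\cap H^1_0(D)=\mathrm{Dom}(A)$, $E_0=L^2(D)=H^0_0(D)$ and $E=H^1_0(D)$, and take $\theta=1/2$. The hypotheses of Lemma~\ref{embedding} then hold: $E_1\Subset E_0$ because $A$ has compact resolvent (its eigenvalues satisfy $\lambda_k\to\infty$); the interpolation embedding $(E_0,E_1)_{1/2,1}\subset E$ holds since, in the Hilbert scale $H^s_0(D)=\mathrm{Dom}(A^{s/2})$, the real-interpolation space $(H^0_0,H^2_0)_{1/2,1}$ embeds continuously into $[H^0_0,H^2_0]_{1/2}=H^1_0(D)$; and $E\subset E_0$ is clear. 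Choosing $p_0=p_1=p$ with any $p>2$ satisfies the final condition, because then $1/p_\theta=1/p<1/2=1-\theta$. Consequently, for each $R>0$ the set
\[
\mathcal V_R=\Big\{u:\ \|u\|_{L^{p}(0,T;H^2)}\le R,\ \|u_t\|_{L^{p}(0,T;L^2)}\le R\Big\}
\]
is relatively compact in $C(0,T;H^1_0(D))$, so $K_R:=\overline{\mathcal V_R}$ is compact there.

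Next I would estimate $\PX(u^\nu\notin\mathcal V_R)$ using the a priori bounds. Because $\|u^\nu\|_{L^p(0,T;H^2)}\le T^{1/p}\max_t\|u^\nu(t)\|_2$ and $\|v^\nu\|_{L^p(0,T;L^2)}\le T^{1/p}\max_t\|v^\nu(t)\|_0$, Theorem~\ref{thm:estimate} gives $\mathbb E\|u^\nu\|_{L^p(0,T;H^2)}^2\le C_T$ and $\mathbb E\|v^\nu\|_{L^p(0,T;L^2)}^2\le C_T$ uniformly in $\nu$. A Chebyshev inequality then yields
\[
\PX(u^\nu\notin\mathcal V_R)\le \PX\big(\|u^\nu\|_{L^p(0,T;H^2)}>R\big)+\PX\big(\|v^\nu\|_{L^p(0,T;L^2)}>R\big)\le \frac{2C_T}{R^2},
\]
which drops below any prescribed $\eps$ once $R$ is large. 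Taking $K_\eps=K_R$ for such an $R$ completes the argument.

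The step needing the most care is selecting the abstract triple $(E_0,E_1,E)$ and the exponents so that Simon's two conditions---the interpolation embedding $(E_0,E_1)_{\theta,1}\subset E$ and the borderline inequality $1-\theta>1/p_\theta$---hold simultaneously with the estimates actually available. The bounds place $u^\nu$ two derivatives above its time derivative $v^\nu$ (namely $H^2$ versus $L^2$), so $H^1$ is exactly the interpolation midpoint and $\theta=1/2$ is forced, whence $1-\theta=1/2$. The saving observation is that Theorem~\ref{thm:estimate} controls the maximum over time, i.e.\ an $L^\infty$-in-time bound, so the spatial estimates persist in $L^p$ in time for every finite $p$; choosing $p>2$ pushes $1/p_\theta$ strictly below $1/2$ and clears the otherwise borderline condition. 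I would also verify that the $p=\infty$ endpoint is never needed, so that no delicacy of Lemma~\ref{embedding} at that exponent is invoked.
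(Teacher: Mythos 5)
Your argument is correct and is exactly the route the paper takes: it derives tightness by combining the uniform bounds of Theorem~\ref{thm:estimate} (on $\|u^\nu\|_2$ and on $u^\nu_t=v^\nu$ in $L^2$) with Simon's compactness criterion, Lemma~\ref{embedding}, the paper merely asserting this without writing out the choice of spaces. Your instantiation $E_1=H^2\cap H^1_0$, $E_0=L^2$, $E=H^1_0$, $\theta=1/2$, $p>2$, followed by Chebyshev, supplies precisely the details the paper omits.
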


 By the above tightness result, to determine the limit of~$u^\nu$ we can pass to the limit $\nu\rightarrow 0$ in a weak sense; that is, for any $\phi\in C_0^\infty(D)$,
we consider the limit of  $u^{\nu,\phi}(t)=\langle u^\nu(t), \phi\rangle$ in the space~$C(0, T)$ as $\nu\rightarrow 0$\,.

\subsection{Limit of $u^{\nu, \phi}$ in~$C(0, T)$}
Now we pass to the limit $\nu\rightarrow 0$ in~$\{u^{\nu,\phi}\}$ in the space~$C(0, T)$ for any $T>0$\,. First, by equations~(\ref{3SWE1})--(\ref{3SWE2}),
\begin{eqnarray}
du^{\nu,\phi}&=&v^{\nu,\phi}\,dt\,,\label{phi-SWE1}\\
dv^{\nu,\phi}&=&-\frac{1}{\nu}\left[v^{\nu,\phi}+\langle
\nabla u^{\nu}, \nabla\phi\rangle-\langle f(u^{\nu}), \phi\rangle\right]dt+
\frac{1}{\sqrt{\nu}}\,dW^\phi(t),\label{phi-SWE2}
\end{eqnarray}
with $  u^{\nu,\phi}(0)=\langle u_0, \phi\rangle$ and $
v^{\nu,\phi}(0)=\langle u_1, \phi\rangle$  where
$v^{\nu,\phi}=\langle v^\nu, \phi\rangle$ and $W^\phi(t)=\langle
W(t), \phi\rangle$. In the following we also write~$v^{\nu,\phi}$ as~$v^{\nu,\phi, u(t)}$ which shows the dependence of~$v^{\nu, \phi}$ on the slow part~$u^\nu$.

Second, for any fixed $u\in H^2(D)\cap H_0^1(D)$  we consider the
fast equation
\begin{equation}\label{u-SWE2}
dv^{\nu, u}=-\frac{1}{\nu}\left[ v^{\nu,u}-\Delta u- f(u) \right]dt
+\frac{1}{\sqrt{\nu}}\,dW(t)\,.
\end{equation}
Equation~(\ref{u-SWE2})  has a unique stationary solution~$\bar{v}^{\nu, u}$. Moreover, the stationary solution~$\bar{v}^{\nu,
u}$ is exponentially mixing and the distribution of~$\bar{v}^{\nu, u}$
is the normal distribution
$\mathcal{N}\left( \Delta u+f(u), Q/2  \right)$~\cite{CF05}.


Now for any $u\in H^2(D)\cap H_0^1(D)$ define
\begin{equation*}
H^\nu(u, t)=\nu\left[v^{\nu,u}(t)-v^{\nu,u}(0)\right]+
\int_0^t\left[v^{\nu, u}(s)-\Delta u-f(u)\right]\,ds\,.
\end{equation*}
Then  $u^{\nu,\phi}$  solves the following equation
\begin{eqnarray}
u^{\nu, \phi}(t)&=&\langle u_0, \phi\rangle-\int_0^t[\langle \nabla
u^{\nu}(s), \nabla\phi \rangle-\langle f(u^{\nu}(s)),
\phi\rangle]\,ds\nonumber\\&& {} +\langle H^\nu(u^\nu(t),t), \phi
\rangle -\nu\left\langle v^{\nu, u}(t)-v^{\nu,
u}(0),\phi\right\rangle\,.\label{e:phi-u}
\end{eqnarray}
Third, we study the behaviour of $\langle H^\nu(u^\nu(t),t), \phi \rangle$ for small~$\nu$.  
Let $H^{\nu,\phi}(u, t)=\langle H^\nu(u, t), \phi\rangle $\,, then
define
\begin{eqnarray}\label{e:M-e}
M_t^{\nu,\varphi}= \frac{1}{\sqrt{\nu}}H^{\nu,\varphi}(u^\nu(t),
t)\,.
 \end{eqnarray}
By the definition of~$H^{\nu,\phi}(u, t)$ and equation~(\ref{u-SWE2}), $M_t^{\nu,\varphi}$ is a martingale with respect to
$\{\mathcal{F}_t: t\geq 0\}$, and the quadratic covariance is $\langle M^{\nu,\phi} \rangle_t=t\langle Q\phi, \phi\rangle$.

Now define $R^{\nu,\phi}(t)=-\left\langle v^{\nu, u}(t)-v^{\nu,
u}(0),\phi\right\rangle$,
then rewrite~(\ref{e:phi-u}) as
\begin{equation}\label{e:phi-SWE}
u^{\nu, \phi}(t)=\langle u_0,
\phi\rangle-\int_0^t\left[\langle\nabla u^{\nu}(s),
\nabla\phi\rangle-\langle f(u^{\nu}(s)),
\phi\rangle\right]ds+\sqrt{\nu}M_t^{\nu,\phi}+\nu R^{\nu,\phi}(t)\,.
\end{equation}

Invoking Theorem~\ref{thm:estimate},
\begin{equation}\label{e:R}
\lim_{\nu\rightarrow 0}\mathbb{E}\left[\max_{0\leq t\leq
T}\sqrt{\nu}\left| R^{\nu, \phi}(t)\right|\right]=0\,.
\end{equation}
Then define the process
\begin{equation}\label{e:cal-M-nu}
\mathcal{M}_t^{\nu,\phi}=\frac{1}{\sqrt{\nu}}\left\{
u^{\nu,\phi}(t)-\langle u_0, \phi\rangle +\int_0^t\big
[\langle\nabla u^{\nu}(s), \nabla\phi\rangle-\langle f(u^{\nu}(s)),
\phi\rangle\big] ds \right\}.
\end{equation}
By the definition of~$H^{\nu,\phi}(u, t)$ and~(\ref{e:R}) we have
the tightness of~$\mathcal{M}_t^{\nu,\phi}$ in space~$C(0, T)$ for
any $T>0$\,. Let $P$~be a limit point of the family of probability
measures  $\{\mathcal{L}(\mathcal{M}_t^{\nu,\phi})\}_{0<\nu\leq
 1}$ and denote by~$\mathcal{M}_t^\phi$, a $C(0, T)$-valued random
 variable with distribution~$P$. Let $\Psi$~be a continuous bounded function
 on~$C(0, T)$. Set $\Psi^\nu(s)=\Psi( u^{\nu, \phi}(s) )$, then   noticing~(\ref{e:R}),
\begin{equation*}
\mathbb{E}\left[(\mathcal{M}^{\nu,\phi}_t-\mathcal{M}^{\nu,\phi}_s) \Psi^\nu(s)\right]=
\mathbb{E}\left[\sqrt{\nu} (R^{\nu, \phi}(t)-R^{\nu,\phi}(s)) \Psi^\nu(s)\right]\rightarrow 0\,, \quad  \nu\rightarrow 0 \,,
\end{equation*}
which yields that the process $\{\mathcal{M}_t^\phi\}_{0\leq t\leq
T}$ is a $P$-martingale with respect to the Borel $\sigma$-filter of~$C(0, T)$.

We consider the quadratic covariation of the martingale~$\mathcal{M}_t^\phi$.  By the definition of~$\mathcal{M}^{\nu,\phi}_t$,
 passing to the limit $\nu\rightarrow 0$ in~(\ref{e:cal-M-nu}), we derive~$\mathcal{M}_t^\phi$ is a square integrable martingale with the
associated quadratic  covariation process is~$\langle
Q\phi, \phi\rangle t$. Then by the representation theorem for
martingales~\cite{IW81},  without changing the distributions  of~$\mathcal{M}^{\nu,\phi}_t$ and~$\mathcal{M}_t^\phi$, one extends
the original  probability space~$(\Omega, \mathcal{F}, \mathbb{P})$
and chooses a new Wiener process~$\hat{W}^\phi(t)$  such that $\mathcal{M}_t^\phi=\sqrt{Q} \hat{W}^\phi(t)$, which is unique in the sense of distribution.

By the definition of~$\mathcal{M}_t^{\nu, \phi}$, $\hat{W}^\phi$~can
be chosen as~$\langle \hat{W}, \phi\rangle$ where $\hat{W}$~is a
cylindrical Wiener process. Then from~(\ref{e:cal-M-nu})  we have in the sense of distribution
\begin{eqnarray*}
&&\langle u^\nu(t), \phi\rangle\\&=&\langle u_0, \phi\rangle
-\int_0^t[\langle \nabla u^\nu(s), \nabla\phi\rangle-\langle
f(u^\nu(s)), \phi \rangle] ds
+\sqrt{\nu}\mathcal{M}_t^\phi+\ord{\sqrt{\nu}}\\
&=&\langle u_0, \phi\rangle -\int_0^t[\langle \nabla u^\nu(s),
\nabla\phi\rangle-\langle f(u^\nu(s)), \phi \rangle] ds
+\sqrt{\nu}\sqrt{Q}\langle\hat{W},
\phi\rangle+\ord{\sqrt{\nu}}
\end{eqnarray*}
for any $\phi\in C_0^\infty(D)$. Then by discarding the higher order
term and the tightness of~$u^\nu$,  we have the following
approximating equation
\begin{equation}\label{e:3-bar-u}
d\bar{u}^\nu=[\Delta \bar{u}^\nu+f(\bar{u}^\nu)]dt+\sqrt{\nu} \,
d\bar{W}^Q \,,
\end{equation}
where $\bar{W}^Q$ is some an~$L^2(D)$ valued Q-Wiener process.

\begin{theorem}
Assume $B_1<\infty$ and $\alpha=1/2$\,. For small $\nu>0$\,, there
is a new probability space~$(\bar{\Omega}, \bar{\mathcal{F}},
\bar{\mathbb{P}})$, an extension of the original probability
space~$(\Omega, \mathcal{F}, \mathbb{P})$, such that for any $T>0$\,,
the solution~$u^\nu$ to~(\ref{3SWE1})--(\ref{3SWE2})  is approximated
by~$\bar{u}^\nu$ which solves~(\ref{e:3-bar-u}), to an error of~$\ord{\sqrt{\nu}}$, in the space~$C(0, T; H_0^1(D))$ for
almost all $\omega\in\bar{\Omega}$\,.
\end{theorem}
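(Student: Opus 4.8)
The plan is to assemble the weak identity~(\ref{e:phi-SWE}), the a~priori bounds of Theorem~\ref{thm:estimate}, and the tightness of $\{\mathcal L(u^\nu)\}$ in $C(0,T;H_0^1(D))$ proved above, and then to identify each term of~(\ref{e:phi-SWE}) as $\nu\to0$. The key structural feature is that~(\ref{e:phi-SWE}) already separates the averaged deterministic drift $-\int_0^t[\langle\nabla u^\nu,\nabla\phi\rangle-\langle f(u^\nu),\phi\rangle]\,ds$ from two fluctuating pieces, the martingale $\sqrt\nu M_t^{\nu,\phi}$ and the remainder $\nu R^{\nu,\phi}$. I would first note that, by~(\ref{e:R}), $\nu R^{\nu,\phi}=\sqrt\nu\,(\sqrt\nu R^{\nu,\phi})=\ord{\sqrt\nu}$ uniformly on $[0,T]$ in mean, so the remainder contributes only to the claimed error and the entire $\sqrt\nu$-order random content is carried by $\sqrt\nu M_t^{\nu,\phi}$.

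Next I would pass to the limit in the martingale. Since $M^{\nu,\phi}$ is an $\{\mathcal F_t\}$-martingale with quadratic variation $\langle M^{\nu,\phi}\rangle_t=t\langle Q\phi,\phi\rangle$ that is independent of $\nu$, the laws $\{\mathcal L(M^{\nu,\phi})\}$ are tight in $C(0,T)$ and every limit point $\mathcal M_t^\phi$ is a continuous square-integrable martingale with the same quadratic variation $t\langle Q\phi,\phi\rangle$; the representation theorem for martingales then produces, on an extension $(\bar\Omega,\bar{\mathcal F},\bar{\mathbb P})$ of the original space, a $Q$-Wiener process $\bar W^Q$ with $\mathcal M_t^\phi=\langle\bar W^Q(t),\phi\rangle$ for each $\phi\in C_0^\infty(D)$. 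Combining this with the tightness of $u^\nu$, so as to extract a jointly convergent subsequence, and with the compact embedding $H^2(D)\Subset H_0^1(D)$ supplied by Theorem~\ref{thm:estimate} together with the Sobolev embedding $H^2(D)\hookrightarrow L^\infty(D)$ valid for $1\le n\le3$ (which lets the cubic term $f(u^\nu)$ pass to the limit), I would conclude that every limit point of $u^\nu$ solves the weak form of~(\ref{e:3-bar-u}) driven by $\bar W^Q$.

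I would then define $\bar u^\nu$ to be the unique solution of~(\ref{e:3-bar-u}) driven by this same $\bar W^Q$ on $(\bar\Omega,\bar{\mathcal F},\bar{\mathbb P})$; existence and uniqueness in $C(0,T;H_0^1(D))$ follow from the standard theory already quoted for~(\ref{SWE1})--(\ref{SWE3}). Writing $w^\nu=u^\nu-\bar u^\nu$, the two weak equations are driven by the identical noise $\sqrt\nu\,d\bar W^Q$, so the noise cancels in the difference and $w^\nu$ obeys a random parabolic equation whose only forcing is $\sqrt\nu(M_t^{\nu,\phi}-\langle\bar W^Q,\phi\rangle)+\nu R^{\nu,\phi}=\ord{\sqrt\nu}$. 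An energy estimate for $w^\nu$, using the local Lipschitz bound $|f(u)-f(v)|\le C(1+|u|^2+|v|^2)|u-v|$ and the uniform $H^2$-bound of Theorem~\ref{thm:estimate} to control $f(u^\nu)-f(\bar u^\nu)$, feeds a Gronwall argument that propagates this $\ord{\sqrt\nu}$ forcing to $\max_{0\le t\le T}\|w^\nu(t)\|_1$. Finally, the Skorokhod representation theorem realises the distributional convergence pathwise on $(\bar\Omega,\bar{\mathcal F},\bar{\mathbb P})$, which gives the stated approximation to within $\ord{\sqrt\nu}$ in $C(0,T;H_0^1(D))$ for almost every $\omega\in\bar\Omega$.

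I expect the principal obstacle to be the upgrade from the weak, test-function convergence to the strong $C(0,T;H_0^1(D))$ estimate with the explicit rate $\ord{\sqrt\nu}$, and in particular the treatment of the cubic term in the equation for $w^\nu$: pairing against a fixed $\phi$ only controls $\langle w^\nu,\phi\rangle$, so the $H_0^1$-norm must be recovered through the energy estimate, and there $f(u^\nu)-f(\bar u^\nu)$ has to be dominated uniformly in $\nu$ by the a~priori bounds, which is exactly where $1\le n\le3$ and the regularity $(u_0,u_1)\in(H^2(D)\cap H_0^1(D))\times H^1(D)$ are used. A secondary technical point is confirming that the limiting quadratic variation is precisely $t\langle Q\phi,\phi\rangle$; this rests on the exponential mixing and the explicit stationary law $\mathcal N(\D u+f(u),Q/2)$ of the fast variable recorded after~(\ref{u-SWE2}).
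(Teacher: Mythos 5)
Your proposal follows the paper's own route: the decomposition~(\ref{e:phi-SWE}) into averaged drift, the martingale $\sqrt{\nu}M_t^{\nu,\phi}$ with quadratic variation $t\langle Q\phi,\phi\rangle$, and the remainder $\nu R^{\nu,\phi}$ controlled by~(\ref{e:R}); then tightness, identification of the limit martingale, and the representation theorem on an extended probability space to produce $\bar W^Q$. The concluding energy/Gronwall estimate for $w^\nu=u^\nu-\bar u^\nu$ and the explicit appeal to Skorokhod representation are details the paper leaves implicit (it simply discards the higher-order terms after the martingale representation step), but they do not change the approach --- they make the same argument more explicit.
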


The above \spde~(\ref{e:3-bar-u}) is more effective than the
limit \pde~(\ref{1HE}) \cite{LW08} as it incorporates
fluctuations for small $\nu>0$\,. This result also implies that the
singular term~$\nu u^\nu_{t}(t)$ is a higher order term than~$\sqrt{\nu}W(t)$ for small $\nu>0$ in the sense of distribution at
least.  The following two sections show that~$\nu
u^\nu_t(t)$ is always a higher order term than~$\nu^\alpha W(t)$ for
any $0\leq \alpha\leq 1/2$\,.


\section{The case of  $\alpha=0$}\label{sec:alpha=0}
Next we consider the case of $\alpha=0$\,; that is, consider the
following \spde
\begin{eqnarray}\label{4-SWE1}
   \nu u^\nu_{tt}+u^\nu_t&=&\D u^\nu+\beta u^\nu-(u^\nu)^3+\dot{W}(t),\\
   u^\nu(0)&=&u_0\,,\quad  u^\nu_t(0)=u_1\,,\label{4-SWE2}\\
   u^\nu|_\Gamma&=&0\label{4-SWE3}\,.
\end{eqnarray}
First we have the following a priori estimates on~$u^\nu$ in the space~$C(0, T; H_0^1(D))$.
\begin{theorem}[Cerrai \& Freidlin~{\cite{CF06}}]\label{CF06}
Assume $B_1<\infty$\,.  For any $T>0$\,, there is a positive constant~$C_T$ such that
\begin{equation*}
\mathbb{E}\left[\max_{0\leq t\leq T}\|u^\nu(t)\|_1^2\right]\leq C_T\,.
\end{equation*}
\end{theorem}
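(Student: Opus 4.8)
The plan is to pass to the mild (semigroup) formulation rather than rely on a naive It\^o--energy identity. Unlike the case $\alpha=1/2$ underlying Theorem~\ref{thm:estimate}, for $\alpha=0$ the noise enters the velocity equation with the singular coefficient $1/\nu$, so the It\^o correction in $\tfrac{\nu}{2}\|u^\nu_t\|_0^2$ equals $\tfrac{B_0}{2\nu}$ and diverges as $\nu\to0$; moreover the companion identity for the position energy $\tfrac12\|\nabla u^\nu\|_0^2-\int_D F(u^\nu)\,dx$ is merely the chain rule and carries no size information, so the energy method alone cannot produce a bound uniform in~$\nu$. First I would write \eqref{4-SWE1}--\eqref{4-SWE3} as a first order system for $(u^\nu,v^\nu)$ with $v^\nu=u^\nu_t$, and let $S_\nu(t)$ be the $C_0$-semigroup generated by $\mathcal{A}_\nu=\left(\begin{smallmatrix}0 & I\\ \D/\nu & -I/\nu\end{smallmatrix}\right)$, which acts diagonally over the eigenbasis $\{e_k\}$ as damped oscillators with roots $r_\pm^k=(-1\pm\sqrt{1-4\nu\lambda_k})/(2\nu)$. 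Then I would split $u^\nu$ into the free evolution of $(u_0,u_1)$, the Duhamel term carrying $f(u^\nu)$, and the stochastic convolution driven by $\tfrac1\nu\,dW$.

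The heart of the matter is an $H_0^1$ bound on the stochastic convolution that is uniform in~$\nu$. On mode~$e_k$ the velocity has stationary variance $\sim b_k/(2\nu)$ and genuinely blows up, but the position component is the convolution of $dw_k$ with the kernel $g_k(\tau)=(e^{r_+^k\tau}-e^{r_-^k\tau})/(r_+^k-r_-^k)$ weighted by $\sqrt{b_k}/\nu$, and $\|g_k\|_{L^2(0,t)}^2$ scales like $\nu^2/(2\lambda_k)$; the weight $b_k/\nu^2$ is thus exactly compensated, leaving position variance $\sim b_k/(2\lambda_k)$ and an $H^1$ contribution $\lambda_k\cdot b_k/(2\lambda_k)=b_k/2$. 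Summing is legitimate because $B_0<\infty$, which follows from $B_1<\infty$ since $\lambda_k\ge\lambda_1>0$. To pass from second moments to $\mathbb{E}\max_{t\le T}\|\cdot\|_1^2$ I would apply the factorization method of~\cite{PZ92}, for which the uniform kernel estimates together with $B_1<\infty$ guarantee the requisite $\nu$-independent bound on the fractional convolution, the weight $B_1$ absorbing the extra fractional power of $A$ near the time origin.

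For the data and the cubic nonlinearity I would exploit the dissipativity of $f(u)=\beta u-u^3$: the one-sided bound $\langle f(u)-f(w),u-w\rangle\le\beta\|u-w\|_0^2$ and the coercive potential $\tfrac14\|u\|_{L^4(D)}^4$ produced by $-u^3$ replace the cubic growth that would otherwise wreck a Gr\"onwall argument, while the embedding $H_0^1(D)\hookrightarrow L^6(D)$ (valid for $n\le3$) bounds the cross terms between the smooth remainder $z^\nu=u^\nu-(\text{convolution})$ and the position-bounded convolution. Since $z^\nu$ solves a pathwise, white-noise-free damped wave equation, its energy identity has no It\^o correction; combining it with the uniform convolution bound and Gr\"onwall then yields $\mathbb{E}\max_{0\le t\le T}\|u^\nu(t)\|_1^2\le C_T$.

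I expect the decisive obstacle to be the maximal version of the convolution estimate: one must show that the cancellation of the two $\nu$-singular factors persists not just in the stationary variance but uniformly on $[0,T]$ and after the supremum in time, and simultaneously prevent the blown-up velocity $\|v^\nu\|_0\sim\nu^{-1/2}$ from re-entering through the nonlinear coupling $\langle v^\nu,f(u^\nu)\rangle$, which must be tamed by integration by parts in time against the $H^1$-bounded convolution. Reconciling the coercive structure needed for the cubic nonlinearity with the singular-noise structure is exactly where the naive energy identity fails, so the semigroup and convolution analysis is unavoidable.
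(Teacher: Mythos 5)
You should first note that the paper does not prove this statement at all: it is imported verbatim from Cerrai and Freidlin~\cite{CF06} (hence the attribution in the theorem header), and the text offers nothing beyond the citation. Your sketch is therefore being compared with the strategy of the cited source rather than with an argument in the paper, and on that score it is essentially a faithful reconstruction. Your diagnosis of why the ``simple energy estimate'' used for Theorem~\ref{thm:estimate} fails at $\alpha=0$ is correct: the It\^o correction to $\tfrac{\nu}{2}\|u^\nu_t\|_0^2$ is $\nu^{2\alpha-1}B_0/2$, which is $\Ord{1}$ at $\alpha=1/2$ but $\Ord{1/\nu}$ at $\alpha=0$. The central cancellation is also computed correctly: with $r_\pm^k=(-1\pm\sqrt{1-4\nu\lambda_k})/(2\nu)$ one gets $\int_0^\infty|g_k(\tau)|^2d\tau=\nu^2/(2\lambda_k)$ in both the overdamped and the oscillatory regime, so the position component of the stochastic convolution has $H^1$ variance $\sum_k b_k/2=B_0/2$ uniformly in~$\nu$; this is exactly the mechanism exploited in~\cite{CF05, CF06}. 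Two steps remain genuinely unfinished rather than merely unwritten. First, the maximal inequality: the factorization method of~\cite{PZ92} requires a $\nu$-uniform bound on weighted kernels of the form $\int_0^t|g_k(\tau)|^2\tau^{-2\theta}d\tau$, and you only assert that $B_1<\infty$ ``absorbs'' the extra fractional power --- this is where the real work of~\cite{CF06} lies. Second, the Gr\"onwall closure for the remainder~$z^\nu$: the coupling $\langle f(z^\nu+\eta^\nu)-f(z^\nu),z^\nu_t\rangle$ costs $\|z^\nu_t\|_0$ times a quantity quadratic in~$\|z^\nu\|_1$, so after absorbing $\|z^\nu_t\|_0^2$ into the damping you are left with a superlinear Gr\"onwall inequality; the dissipativity of $-u^3$ must enter through the potential~$F$ inside the energy, not merely through the one-sided Lipschitz bound you quote, or the resulting estimate is only local in time. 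You flag both obstacles yourself, which is to your credit, but as written the argument is a correct outline of the Cerrai--Freidlin proof rather than a self-contained one.
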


We follow the approach for the case of $\alpha=1/2$\,. For this we
introduce the  scalings $\tilde{u}^\nu=\sqrt{\nu}u^\nu$ and $\tilde{v}^\nu=\sqrt{\nu}u^\nu_t$\,.
Then
\begin{eqnarray*}
d \tilde{u}^\nu&=&\tilde{v}^\nu dt\,,\quad  \tilde{u}^\nu(0)=\sqrt{\nu}u_0\,,\\
d\tilde{v}^\nu&=&-\frac{1}{\nu}\left[\tilde{v}^\nu- \Delta
\tilde{u}^\nu-\sqrt{\nu}f\left(\frac{\tilde{u}^\nu}{\sqrt{\nu}}
\right)\right] dt+ \frac{1}{\sqrt{\nu}} \, dW(t),
\quad \tilde{v}^\nu(0)=\sqrt{\nu}u_1\,.
\end{eqnarray*}
By standard energy estimates~\cite{WL10}, by a similar discussion to that in
Section~\ref{sec:sqrt-nu}, and by Theorem~\ref{CF06}, we have the following theorem.
\begin{theorem}
Assume $B_1<\infty$\,. For any $T>0$\,, there is a positive constant~$C_T$
 such that
\begin{equation*}
\mathbb{E}\left[ \max_{0\leq t\leq
T}\|\tilde{u}^\nu(t)\|_2^2+\max_{0\leq t\leq T}\|
\tilde{v}^\nu(t)\|^2_0\right]\leq C_T \,,
\end{equation*}
and for any integer $m>0$
\begin{equation*}
\mathbb{E}\int_0^T\|\tilde{u}^\nu(t)\|_1^{2m}dt\leq C_T\,.
\end{equation*}
Moreover,
 the distribution of~$\tilde{u}^\nu$ is tight in space~$C(0, T; H_0^1(D))$.
\end{theorem}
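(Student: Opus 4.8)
\emph{Proof strategy.} The plan is to observe that the scaled pair $(\tilde u^\nu,\tilde v^\nu)$ solves a system of exactly the same form as~(\ref{3SWE1})--(\ref{3SWE2}), the only change being that the nonlinearity $f$ is replaced by $\tilde f_\nu(w):=\sqrt\nu\,f(w/\sqrt\nu)=\beta w-w^3/\nu$, while the noise term $\tfrac1{\sqrt\nu}\,dW$ is left untouched. Equivalently, since $\tilde u^\nu=\sqrt\nu\,u^\nu$ and $\tilde v^\nu=\sqrt\nu\,u^\nu_t$, the three asserted bounds are precisely $\nu\,\mathbb E[\max_t\|u^\nu\|_2^2]\le C_T$, $\nu\,\mathbb E[\max_t\|u^\nu_t\|_0^2]\le C_T$ and $\nu^m\,\mathbb E\int_0^T\|u^\nu\|_1^{2m}\,dt\le C_T$. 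Thus I would follow the proof of Theorem~\ref{thm:estimate} for the scaled data, the only thing needing verification being that the nonlinear contributions remain bounded uniformly in~$\nu$ under the replacement $f\mapsto\tilde f_\nu$.

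For the $H^2$ estimate I would run the $H^2\times H^1$ energy estimate of~\cite{WL10}, applying It\^o's formula to $\tfrac\nu2\|\tilde v^\nu\|_1^2+\tfrac12\|\tilde u^\nu\|_2^2$ augmented by the nonlinear potential term $-\tfrac12\int_D\tilde f_\nu'(\tilde u^\nu)|\nabla\tilde u^\nu|^2\,dx$, which is chosen so that the top-order cross terms cancel. The point to check is that, after rewriting $\tilde u^\nu/\sqrt\nu=u^\nu$, the coefficient $\tilde f_\nu'(\tilde u^\nu)=\beta-3(u^\nu)^2$ and the residual lower-order contributions carry favorable signs or explicit $\nu$-weights (because $\tilde u^\nu=\Ord{\sqrt\nu}$ in the relevant norms) and so stay controllable uniformly in~$\nu$; the It\^o correction supplies the $\nu$-independent source $\tfrac12 B_1$, which is where the hypothesis $B_1<\infty$ is used. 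Gronwall's inequality together with the Burkholder--Davis--Gundy inequality for the martingale part then give $\mathbb E[\max_t\|\tilde u^\nu\|_2^2]\le C_T$.

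For the bound on~$\|\tilde v^\nu\|_0$ I would use the mild representation of the fast equation, $\tilde v^\nu(t)=e^{-t/\nu}\tilde v^\nu(0)+\frac1\nu\int_0^t e^{-(t-s)/\nu}[\D\tilde u^\nu(s)+\tilde f_\nu(\tilde u^\nu(s))]\,ds+\frac1{\sqrt\nu}\int_0^t e^{-(t-s)/\nu}\,dW(s)$, exactly as in Theorem~\ref{thm:estimate}. The initial and deterministic integral terms are dominated by $\sup_s\|\D\tilde u^\nu+\tilde f_\nu(\tilde u^\nu)\|_0=\sqrt\nu\,\sup_s\|\D u^\nu+f(u^\nu)\|_0$; its Laplacian and linear parts are controlled by the $H^2$ bound just obtained, while the cubic part obeys $\sqrt\nu\,\|(u^\nu)^3\|_0\le C\sqrt\nu\,\|u^\nu\|_1^3$, which is controlled because $\nu\,\mathbb E[\max_t\|u^\nu\|_1^6]$ stays bounded, the $\sqrt\nu$ prefactor absorbing the uniform-in-$\nu$ higher moments of $\|u^\nu\|_1$ supplied by~\cite{WL10} and Theorem~\ref{CF06}. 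The stochastic convolution has $\nu$-independent variance $\le B_0/2$ and is handled by the maximal inequality~\cite[Lemma~7.2]{PZ92}. The integrated $2m$-moment bound then follows from the corresponding higher-moment energy estimate and Poincar\'e's inequality, as in Section~\ref{sec:sqrt-nu}.

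Finally, for tightness I would argue as in Section~\ref{sec:sqrt-nu}: the estimates put $\{\tilde u^\nu\}$ in a bounded set of $L^\infty(0,T;H^2(D)\cap H^1_0(D))$ with $\partial_t\tilde u^\nu=\tilde v^\nu$ bounded in $L^\infty(0,T;L^2(D))$, both in expectation, so by Markov's inequality $\mathbb P(\tilde u^\nu\notin K_R)\le C_T/R$ uniformly in~$\nu$ for suitable sublevel sets $K_R$. Each $K_R$ is relatively compact in $C(0,T;H^1_0(D))$ by Lemma~\ref{embedding} applied with $E_1=H^2(D)\cap H^1_0(D)\Subset E_0=L^2(D)$, $E=H^1_0(D)$, $\theta=\tfrac12$ and $p_0=p_1=\infty$, which yields the tightness of $\{\mathcal L(\tilde u^\nu)\}$ in $C(0,T;H^1_0(D))$. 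The main obstacle throughout is the uniform-in-$\nu$ control of the scaled cubic term: one must exploit that the $\sqrt\nu$ prefactor compensates the growth of $\|u^\nu\|_1^3$, which in turn rests on having uniform higher moments of $\|u^\nu\|_1$ rather than merely the second moment of Theorem~\ref{CF06}.
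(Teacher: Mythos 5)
Your proposal is correct and follows essentially the same route as the paper, whose own justification is just the one-line appeal to the energy estimates of~\cite{WL10}, the argument of Section~\ref{sec:sqrt-nu} applied to the scaled system, and the a priori bound of Theorem~\ref{CF06}; your elaboration (mild formulation plus the stochastic-convolution maximal inequality for $\tilde v^\nu$, Simon's Lemma~\ref{embedding} for tightness) matches what the paper does in the $\alpha=1/2$ case. The only point to phrase carefully is that the uniform higher moments should be derived for the scaled quantity $\|\tilde u^\nu\|_1^{2m}=\nu^m\|u^\nu\|_1^{2m}$ directly from powers of the energy functional, since Theorem~\ref{CF06} itself only supplies the second moment of $\|u^\nu\|_1$.
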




We consider the asymptotic approximation of~$\tilde{u}^\nu$ for
small $\nu>0$\,. For any $\phi\in  C_0^\infty(D)$, let
$\tilde{u}^{\nu, \phi}=\langle \tilde{u}^\nu, \phi\rangle $,
$\tilde{v}^{\nu, \phi}=\langle \tilde{v}^\nu, \phi\rangle$  and
$W^\phi(t)=\langle W(t), \phi\rangle$. Then
\begin{eqnarray*}
d\tilde{u}^{\nu,\phi}&=&\tilde{v}^{\nu,\phi} dt\,,\\
d\tilde{v}^{\nu,\phi}&=&-\frac{1}{\nu}\left[\tilde{v}^{\nu, \phi}+
\langle \nabla \tilde{u}^\nu, \nabla\phi \rangle-\sqrt{\nu}\langle
f(\tilde{u}^\nu/\sqrt{\nu}), \phi\rangle \right]
dt+\frac{1}{\sqrt{\nu}}\,dW^\phi(t),
\end{eqnarray*}
with $\tilde{u}^{\nu,\phi}(0)=\langle \tilde{u}^\nu(0), \phi\rangle $ and
$\tilde{v}^{\nu,\phi}(0)=\langle \tilde{v}^\nu(0), \phi\rangle $.

We also  consider the following fast \spde\ for fixed~$\nu$
and $\tilde{u}\in H^2(D)\cap H_0^1(D)$:
\begin{equation}\label{e:v-tilde}
d\tilde{v}^{\nu, \tilde{u}}=-\frac{1}{\nu}\left[
\tilde{v}^{\nu,\tilde{u}}- \Delta \tilde{u}-
\sqrt{\nu}f\left(\tilde{u}/\sqrt{\nu}\right) \right]dt
+\frac{1}{\sqrt{\nu}}\,dW(t)\,.
\end{equation}
For fixed $\nu\in (0,1]$ and $\tilde{u}\in H^2(D)\cap H_0^1(D)$,
\spde~(\ref{e:v-tilde}) has a unique stationary solution with the normal
distribution $\mathcal{N}\left( \Delta \tilde{u}+\sqrt{\nu}f(\tilde{u}/\sqrt{\nu}), \; Q/2 \right)$~\cite{CF05}.
Now for any $\tilde{u}\in H^2(D)\cap H_0^1(D)$ define
\begin{equation*}
\widetilde{H}^\nu(\tilde{u},t)=\nu\left[\tilde{v}^{\nu,\tilde{u}}(t)-\tilde{v}^{\nu,\tilde{u}}(0)\right]+
\int_0^t\left[\tilde{v}^{\nu, \tilde{u}}(s)-\Delta
\tilde{u}-\sqrt{\nu}f(\tilde{u}/\sqrt{\nu})\right]\,ds\,.
\end{equation*}
Thus  we can follow the same discussion in last section for the case of $\alpha=1/2$\,. 
We write
\begin{eqnarray}
\tilde{u}^{\nu, \phi}(t)&=&\sqrt{\nu}\langle u_0,
\phi\rangle-\int_0^t\langle\nabla\tilde{u}^\nu(s), \nabla\phi
\rangle ds+\sqrt{\nu}\int_0^t\langle
f\left(\tilde{u}^\nu(s)/\sqrt{\nu}\right), \phi\rangle
ds\nonumber\\&&{} +\sqrt{\nu}\tilde{\mathcal{M}}^{\nu, \phi}_t \,,
\label{e:tilde-u}
\end{eqnarray}
where $\sqrt{\nu}\tilde{\mathcal{M}}^{\nu, \phi}_t$ is the remainder
term. By a similar discussion to that of the last section,
$\tilde{\mathcal{M}}^{\nu, \phi}_t$ is tight in space~$C(0, T)$ for
any $T>0$\,. Let $\tilde{P}$~be a limit point of the family of
probability measures $\mathcal{L}\{\tilde{\mathcal{M}}^{\nu,
\phi}_t\}_{0<\nu\leq1}$ in space~$C(0, T)$. Let
$\tilde{\mathcal{M}}^\phi_t$ be a $C(0, T)$-valued random variable
with distribution~$\tilde{P}$. Then we have the following lemma.
\begin{lemma}
For any $\phi\in C_0^\infty(D)$, the process~$\tilde{\mathcal{M}}_t^\phi$ defined on the probability space~$(C(0,
T), \mathcal{B}(C(0, T)), \tilde{P})$ is a square integrable
martingale with the associated quadratic covariation process~$\langle Q\phi, \phi\rangle t$\,.
\end{lemma}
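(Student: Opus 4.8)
The plan is to follow the template of Section~\ref{sec:sqrt-nu}, but to exploit the fact that after the scaling the martingale part of $\tilde u^{\nu,\phi}$ is literally the noise $W^\phi(t)=\langle W(t),\phi\rangle$ plus a remainder that vanishes with~$\nu$. First I would make this decomposition explicit. Integrating the fast equation for $\tilde v^{\nu,\phi}$, multiplying by~$\nu$, and substituting $\int_0^t\tilde v^{\nu,\phi}(s)\,ds=\tilde u^{\nu,\phi}(t)-\tilde u^{\nu,\phi}(0)$ from the slow equation eliminates the $\Ord{1/\nu}$ drift; comparison with~(\ref{e:tilde-u}) together with $\tilde u^{\nu,\phi}(0)=\sqrt{\nu}\langle u_0,\phi\rangle$ then gives
\begin{equation*}
\tilde{\mathcal M}^{\nu,\phi}_t=W^\phi(t)+\tilde R^{\nu,\phi}(t),\qquad \tilde R^{\nu,\phi}(t)=-\sqrt{\nu}\big[\tilde v^{\nu,\phi}(t)-\tilde v^{\nu,\phi}(0)\big].
\end{equation*}
Equivalently $\tfrac{1}{\sqrt{\nu}}\langle\widetilde H^\nu(\tilde u^\nu(t),t),\phi\rangle=W^\phi(t)$, the frozen~$\tilde u$ cancelling algebraically in~$\widetilde H^\nu$ exactly as $H^\nu(u,t)=\sqrt{\nu}W(t)$ in the case $\alpha=1/2$. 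The point is that $W^\phi$ does not depend on~$\nu$ and is itself a continuous, square integrable $\{\mathcal F_t\}$-martingale with $\langle W^\phi\rangle_t=t\langle Q\phi,\phi\rangle$; the whole lemma therefore reduces to controlling the remainder~$\tilde R^{\nu,\phi}$.

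Second, I would estimate the remainder and thereby identify the limit directly. Since $|\tilde v^{\nu,\phi}(t)|\le\|\tilde v^\nu(t)\|_0\|\phi\|_0$, the a priori bound $\mathbb E[\max_{[0,T]}\|\tilde v^\nu\|_0^2]\le C_T$ of the preceding theorem yields
\begin{equation*}
\mathbb E\Big[\max_{0\le t\le T}\big|\tilde{\mathcal M}^{\nu,\phi}_t-W^\phi(t)\big|^2\Big]\le 4\nu\|\phi\|_0^2\,\mathbb E\big[\max_{[0,T]}\|\tilde v^\nu\|_0^2\big]\le C_T\nu\longrightarrow0.
\end{equation*}
Because $W^\phi$ is a fixed process independent of~$\nu$, this says $\tilde{\mathcal M}^{\nu,\phi}\to W^\phi$ in $L^2(\Omega;C(0,T))$, hence in probability, hence in distribution; so the laws $\mathcal L(\tilde{\mathcal M}^{\nu,\phi})$ converge fully and their only limit point is $\tilde P=\mathcal L(W^\phi)$. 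Under~$\tilde P$ the coordinate process $\tilde{\mathcal M}^\phi_t$ thus has the law of $W^\phi$, which is a continuous square integrable martingale with independent increments relative to its own (hence the Borel) filtration and with quadratic covariation $\langle Q\phi,\phi\rangle t$. This is precisely the assertion of the lemma.

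Should one prefer to stay strictly within the martingale-problem formulation used in Section~\ref{sec:sqrt-nu} (which is what one needs when the remainder is only tight rather than strongly small), the same conclusion follows by testing. For $0\le s\le t\le T$ and any bounded continuous $\mathcal B_s$-measurable functional~$\tilde\Psi$ on $C(0,T)$, weak convergence gives $\mathbb E^{\tilde P}[(\pi_t-\pi_s)\tilde\Psi]=\lim_\nu\mathbb E[(\tilde{\mathcal M}^{\nu,\phi}_t-\tilde{\mathcal M}^{\nu,\phi}_s)\tilde\Psi(\tilde{\mathcal M}^{\nu,\phi})]$, where $\pi_r$ is the coordinate map. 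Since the remainder is $\{\mathcal F_r\}$-adapted, the tested path is $\mathcal F_s$-measurable, so the $\{\mathcal F_t\}$-martingale property of $W^\phi$ annihilates the increment $W^\phi(t)-W^\phi(s)$ for every~$\nu$ while the remainder contributes $\Ord{\sqrt{\nu}}$; this gives the martingale property. Running the same test with $\pi_r^2-\langle Q\phi,\phi\rangle r$, expanding $(\tilde{\mathcal M}^{\nu,\phi}_t)^2=(W^\phi(t))^2+2W^\phi(t)\tilde R^{\nu,\phi}(t)+(\tilde R^{\nu,\phi}(t))^2$ and using that $(W^\phi(t))^2-\langle Q\phi,\phi\rangle t$ is an $\{\mathcal F_t\}$-martingale (the cross and remainder terms vanishing by Cauchy--Schwarz and the bound above), yields the quadratic covariation. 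The main obstacle in this second route is the passage of the \emph{second}-moment quantities through weak convergence: $\pi_t$ and $\pi_t^2$ are unbounded, so one must supply uniform integrability (a uniform moment bound on $\max_{[0,T]}|\tilde{\mathcal M}^{\nu,\phi}_t|$, available from the Gaussianity of $W^\phi$ and the energy estimates) and must verify that the test functionals are genuinely adapted, so that the $\mathcal F_s$-measurability used to kill the $W^\phi$ increments is legitimate --- exactly the technical points that the strong-convergence argument of the previous paragraph sidesteps.
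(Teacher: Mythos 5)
Your proposal is correct, and its main line of argument is genuinely more direct than the paper's. The paper proves this lemma only by appeal to ``the same discussion'' as in Section~\ref{sec:sqrt-nu}: there the frozen fast equation integrates exactly to $H^\nu(u,t)=\sqrt{\nu}\,W(t)$, so the prelimit process is the fixed martingale $W^\phi$ plus a remainder shown to vanish only in $L^1$ of the sup norm, and the martingale property and quadratic covariation of the limit are then extracted through the weak-convergence machinery (tightness, a limit point $\tilde P$, bounded continuous test functionals, and a limit passage for the quadratic variation before invoking the representation theorem). You exploit the same algebraic identity $\widetilde H^\nu(\tilde u,t)=\sqrt{\nu}\,W(t)$ but push it further: your explicit remainder $-\sqrt{\nu}\bigl[\tilde v^{\nu,\phi}(t)-\tilde v^{\nu,\phi}(0)\bigr]$, combined with the a priori bound on $\mathbb{E}\bigl[\max_{[0,T]}\|\tilde v^\nu\|_0^2\bigr]$, gives convergence of $\tilde{\mathcal M}^{\nu,\phi}$ to $W^\phi$ in $L^2(\Omega;C(0,T))$, which pins down the \emph{unique} limit law as $\mathcal L(W^\phi)$ and reduces the lemma to the elementary fact that $W^\phi$ is a continuous square integrable Gaussian martingale with $\langle W^\phi\rangle_t=t\langle Q\phi,\phi\rangle$. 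This buys full (not merely subsequential) convergence of the laws and dispenses with the uniform-integrability issues attached to testing with the unbounded functionals $\pi_t$ and $\pi_t^2$; the paper's martingale-problem route, which your second paragraph faithfully reproduces (and whose adaptedness and uniform-integrability caveats you rightly flag), is the one that would survive if the remainder were only tight rather than strongly small. Either argument establishes the lemma.
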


By the representation theorem for martingales~\cite{IW81},  without
changing the distributions  of~$\tilde{\mathcal{M}}^{\nu,\phi}_t$
and~$\tilde{\mathcal{M}}_t^\phi$ one can extend the original
probability space~$(\Omega, \mathcal{F}, \mathbb{P})$ and choose a
new cylindrical Wiener process~$\tilde{W}(t)$  such that $\tilde{\mathcal{M}}_t^\phi=\sqrt{Q} \langle \tilde{W}, \phi\rangle$, which is unique in the sense of distribution.

Then in the sense of distribution by~(\ref{e:tilde-u}) we write out
\begin{eqnarray}
\langle \tilde{u}^\nu(t), \phi\rangle&=&\sqrt{\nu}\langle u_0,
\phi\rangle-\int_0^t\langle \nabla \tilde{u}^\nu(s),
\nabla\phi\rangle ds
+\sqrt{\nu}\int_0^t\langle
f\left(\tilde{u}^\nu(s)/\sqrt{\nu} \right), \phi \rangle ds
\nonumber\\&&{}
+\sqrt{\nu}\tilde{\mathcal{M}}_t^\phi+\ord{\sqrt{\nu}}\nonumber\\
&=&\sqrt{\nu}\langle u_0, \phi\rangle-\int_0^t\langle \nabla
\tilde{u}^\nu(s), \nabla\phi\rangle ds
+\sqrt{\nu}\int_0^t\langle
f\left(\tilde{u}^\nu(s)/\sqrt{\nu} \right), \phi \rangle
ds
\nonumber\\&&{}
+\sqrt{\nu}\sqrt{Q}\langle \tilde{W},
\phi\rangle+\ord{\sqrt{\nu}}
\end{eqnarray}
for any $\phi\in C_0^\infty(D)$. Then we have, noticing that
$\tilde{u}^\nu=\sqrt{\nu}u^\nu$, the following approximating
\spde\ for small $\nu>0$\,:
\begin{equation}\label{e:4-bar-u}
d\bar{u}^\nu=[\Delta\bar{u}^\nu+f(\bar{u}^\nu)]dt+d\bar{W}^Q, \quad
\bar{u}^\nu(0)=u_0\,,
\end{equation}
where $\bar{W}^Q$ is some $L^2(D)$~valued Q-Wiener process. Then
we infer the following result.
\begin{theorem}
Assume $B_1<\infty$ and $\alpha=0$\,. Then for small $\nu>0$\,, there is
a new probability space~$(\bar{\Omega}, \bar{\mathcal{F}},
\bar{\mathbb{P}})$ which is an extension of the original probability
space~$(\Omega, \mathcal{F}, \mathbb{P})$ such that for any $T>0$\,,
the solution~$u^\nu$ to~(\ref{4-SWE1})--(\ref{4-SWE3}) is
approximated by~$\bar{u}^\nu$ which solves~(\ref{e:4-bar-u}), to an 
error of~$\ord{1}$, in the space~$C(0, T; H_0^1(D))$ for
almost all $\omega\in\bar{\Omega}$\,.
\end{theorem}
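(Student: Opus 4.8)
The plan is to collect the ingredients assembled earlier in this section and then to undo the scaling $\tilde u^\nu=\sqrt\nu\,u^\nu$, working weakly against a fixed test function $\phi\in C_0^\infty(D)$. The starting point is the identity~\eqref{e:tilde-u} for $\tilde u^{\nu,\phi}=\langle\tilde u^\nu,\phi\rangle$, together with two facts already in hand: the uniform a priori bounds and the tightness of $\mathcal L(\tilde u^\nu)$ in $C(0,T;H_0^1(D))$ from the preceding theorem, and the lemma identifying any limit $\tilde{\mathcal M}_t^\phi$ of the remainder martingales $\tilde{\mathcal M}_t^{\nu,\phi}$ as a square integrable martingale with quadratic covariation $\langle Q\phi,\phi\rangle\,t$.

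First I would invoke the representation theorem for martingales~\cite{IW81}: on an extension $(\bar\Omega,\bar{\mathcal F},\bar{\mathbb P})$ of $(\Omega,\mathcal F,\mathbb P)$, and without altering the laws of $\tilde{\mathcal M}_t^{\nu,\phi}$ and $\tilde{\mathcal M}_t^\phi$, there is a cylindrical Wiener process $\tilde W$ with $\tilde{\mathcal M}_t^\phi=\sqrt Q\,\langle\tilde W,\phi\rangle$, unique in distribution. Next I would divide~\eqref{e:tilde-u} by $\sqrt\nu$. Using $\nabla\tilde u^\nu=\sqrt\nu\,\nabla u^\nu$ and $\tilde u^\nu/\sqrt\nu=u^\nu$, so that the scaled nonlinearity $\sqrt\nu\,f(\tilde u^\nu/\sqrt\nu)$ reverts cleanly to $\sqrt\nu\,f(u^\nu)$, this turns the scaled identity into the weak identity for $u^\nu$ itself, namely $\langle u^\nu(t),\phi\rangle=\langle u_0,\phi\rangle-\int_0^t\langle\nabla u^\nu(s),\nabla\phi\rangle\,ds+\int_0^t\langle f(u^\nu(s)),\phi\rangle\,ds+\tilde{\mathcal M}_t^{\nu,\phi}$. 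Letting $\nu\to0$ and replacing $\tilde{\mathcal M}_t^{\nu,\phi}$ by its representation $\sqrt Q\,\langle\tilde W,\phi\rangle$ then exhibits $u^\nu$ as an approximate weak solution of~\eqref{e:4-bar-u}, the discrepancy being $\ord{1}$.

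The crux is the passage to the limit in the nonlinear term $\int_0^t\langle f(u^\nu(s)),\phi\rangle\,ds$. Because $f(u)=\beta u-u^3$ is cubic, the weak convergence of $u^\nu$ does not by itself control $(u^\nu)^3$, so I would upgrade it to strong convergence: by the tightness of $\mathcal L(u^\nu)$ in $C(0,T;H_0^1(D))$ and Skorokhod's representation theorem I pass to versions converging almost surely in $C(0,T;H_0^1(D))$ on a common probability space. The compact embedding underlying Lemma~\ref{embedding}, together with the dimensional restriction $1\le n\le3$ which yields $H_0^1(D)\hookrightarrow L^6(D)\hookrightarrow L^4(D)$, then lets me pass to the limit pathwise in $\langle(u^\nu)^3,\phi\rangle$. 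To turn this pathwise convergence into convergence in the sense of distribution I would invoke uniform integrability, supplied by the uniform bound $\mathbb E\max_{0\le t\le T}\|u^\nu(t)\|_1^2\le C_T$ of Theorem~\ref{CF06}, supplemented, if a higher moment is needed for the cubic term, by energy estimates of the type in~\cite{CF06, WL10}. This step identifies every limit point of $\mathcal L(u^\nu)$ as a solution of the martingale problem associated with~\eqref{e:4-bar-u}. I expect this nonlinear passage to the limit to be the main obstacle; the martingale representation and the unscaling are essentially bookkeeping once the earlier lemma is granted.

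Finally, since~\eqref{e:4-bar-u} with its dissipative cubic nonlinearity is well posed (e.g.\ by the methods of~\cite{PZ92}), its solution law is the unique such limit point, so $\mathcal L(u^\nu)$ converges to $\mathcal L(\bar u^\nu)$. Combining this with the martingale representation on the extended space $(\bar\Omega,\bar{\mathcal F},\bar{\mathbb P})$ yields, for almost every $\omega\in\bar\Omega$, the approximation of $u^\nu$ by $\bar u^\nu$ in $C(0,T;H_0^1(D))$ to an error of $\ord{1}$, as claimed.
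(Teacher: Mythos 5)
Your proposal follows essentially the same route as the paper: the scaling $\tilde u^\nu=\sqrt\nu\,u^\nu$, the weak identity~\eqref{e:tilde-u}, the identification of the limit martingale $\tilde{\mathcal M}_t^\phi$ with quadratic covariation $\langle Q\phi,\phi\rangle t$, the martingale representation theorem on an extended probability space, and then undoing the scaling to arrive at~\eqref{e:4-bar-u} with an $\ord1$ error. The extra care you take with the cubic term (Skorokhod representation, the embedding $H_0^1(D)\hookrightarrow L^6(D)$ for $n\le3$, uniform integrability, and uniqueness of the limiting martingale problem) is detail the paper leaves implicit, but it does not change the argument's structure.
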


\section{The case of $0<\alpha<1/2$}\label{sec:alpha=1}

Now we consider the case of $0< \alpha < 1/2$\,; that is,  consider the
following \spde
\begin{eqnarray}\label{5-SWE1}
   \nu u^\nu_{tt}+u^\nu_t&=&\D u^\nu+\beta u^\nu-(u^\nu)^3+\nu^\alpha\dot{W}(t),\\
   u^\nu(0)&=&u_0\,,\quad u^\nu_t(0)=u_1\,,\label{5-SWE2}\\
   u^\nu|_\Gamma&=&0\label{5-SWE3}\,.
\end{eqnarray}

First, by the same analysis as Theorem~\ref{CF06}, we also have the
following result on the a priori estimates on~$u^\nu$.
\begin{theorem}[Cerrai \& Freidlin~{\cite{CF06}}] \label{cfthm}
 Assume $B_1<\infty$\,. For any $T>0$\,, there is a
positive constant~$C_T$ such that
\begin{equation*}
\mathbb{E}\left[\max_{0\leq t\leq T}\|u^\nu(t)\|_1^2\right]\leq C_T\,.
\end{equation*}
\end{theorem}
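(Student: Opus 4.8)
The plan is to rewrite \eqref{5-SWE1}--\eqref{5-SWE3} as the first-order system for $(u^\nu,v^\nu)$ with $v^\nu=u^\nu_t$ and attempt the natural wave energy
\[
E^\nu(t)=\tfrac{\nu}{2}\|v^\nu\|_0^2+\tfrac12\|\nabla u^\nu\|_0^2-\int_D F(u^\nu)\,dx .
\]
An application of It\^o's formula, using $du^\nu=v^\nu\,dt$ and $\nu\,dv^\nu=[-v^\nu+\Delta u^\nu+f(u^\nu)]\,dt+\nu^\alpha\,dW$, makes the coupling terms $\langle v^\nu,\Delta u^\nu\rangle$ and $\langle v^\nu,f(u^\nu)\rangle$ cancel and leaves
\[
dE^\nu=-\|v^\nu\|_0^2\,dt+\tfrac12\nu^{2\alpha-1}B_0\,dt+\nu^\alpha\langle v^\nu,dW\rangle .
\]
For $\alpha=1/2$ the source $\tfrac12\nu^{2\alpha-1}B_0=\tfrac12 B_0$ is bounded and this is essentially the estimate of Theorem~\ref{thm:estimate}; but for $0<\alpha<1/2$ the factor $\nu^{2\alpha-1}\to\infty$, so the direct estimate fails to control $E^\nu$ uniformly in $\nu$. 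Following Cerrai and Freidlin I would therefore split $u^\nu=z^\nu+w^\nu$, where $z^\nu$ solves the linear stochastic wave equation $\nu z^\nu_{tt}+z^\nu_t=\Delta z^\nu+\nu^\alpha\dot W$ with zero data and $w^\nu$ solves the noise-free equation $\nu w^\nu_{tt}+w^\nu_t=\Delta w^\nu+f(w^\nu+z^\nu)$ with the original data, so that the divergent energy injection is confined to the fast velocity $z^\nu_t$ while the position $z^\nu$ stays small.

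For the linear part I would diagonalise in the eigenbasis $\{e_k\}$, so that $z^\nu=\sum_k z^\nu_k e_k$ with each $z^\nu_k$ a scalar damped oscillator $\nu\ddot z_k+\dot z_k+\lambda_k z_k=\nu^\alpha\sqrt{b_k}\,\dot w_k$ driven by an independent white noise. Solving the $2\times 2$ Lyapunov equation for the stationary covariance gives position variance $\nu^{2\alpha}b_k/(2\lambda_k)$ and velocity variance $\nu^{2\alpha-1}b_k/2$, whence, using \eqref{Q},
\[
\sup_{0\le t\le T}\mathbb{E}\|z^\nu(t)\|_1^2\le C_T\nu^{2\alpha}B_0 ,
\]
with the $L^2$, $L^6$ and $L^\infty$ norms of $z^\nu$ controlled uniformly in $\nu$ as well, whereas $\mathbb{E}\|z^\nu_t\|_0^2$ carries the $\nu^{2\alpha-1}$ blow-up and is simply never used. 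The maximal bound $\mathbb{E}[\max_{0\le t\le T}\|z^\nu\|_1^2]\le C_T$ then follows from the factorisation method for the stochastic convolution.

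For the nonlinear part I would run a pathwise energy estimate with $\mathcal{E}_w=\tfrac{\nu}{2}\|w^\nu_t\|_0^2+\tfrac12\|\nabla w^\nu\|_0^2-\int_D F(w^\nu)\,dx$; the same wave-energy cancellation leaves $\tfrac{d}{dt}\mathcal{E}_w=-\|w^\nu_t\|_0^2+\langle w^\nu_t,\,f(w^\nu+z^\nu)-f(w^\nu)\rangle$. Since $-\int_D F(w)\,dx\ge\tfrac18\|w\|_{L^4}^4-C|D|$, the energy dominates $\|w^\nu\|_{L^4}^4$, which is exactly what is needed to absorb the cross terms in $f(w+z)-f(w)=\beta z-3w^2z-3wz^2-z^3$ using the Sobolev embedding $H^1\hookrightarrow L^6$ valid for $n\le 3$. \emph{The main obstacle} is precisely the control of these cubic cross terms: the offending integrals $\int (w^\nu)^4(z^\nu)^2$ and $\int(w^\nu)^2(z^\nu)^4$ force a Gronwall factor of the form $\exp\!\big(C\int_0^T(\|z^\nu\|_{L^\infty}^2+\|z^\nu\|_{L^\infty}^4)\,ds\big)$, and taking expectations then demands exponential integrability of fourth powers of Gaussian norms, which Fernique's theorem does not supply directly; closing this step cleanly --- by exploiting the dissipation $-\|w^\nu_t\|_0^2$ more carefully and the smallness $\|z^\nu\|_1=\Ord{\nu^\alpha}$ of the linear part --- is the genuinely delicate point and the technical heart of \cite{CF06}.

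Once the pathwise Gronwall argument yields a bound on $\max_{0\le t\le T}\mathcal{E}_w$ in terms of integrals of norms of $z^\nu$, taking expectations and combining with the linear estimates through $\|u^\nu\|_1\le\|z^\nu\|_1+\|w^\nu\|_1$ gives $\mathbb{E}[\max_{0\le t\le T}\|u^\nu(t)\|_1^2]\le C_T$. Finally I would observe that the only difference from the case $\alpha=0$ of Theorem~\ref{CF06} is the benign factor $\nu^\alpha\le1$ multiplying the noise, entering the variances of $z^\nu$ as $\nu^{2\alpha}\le1$ and thus only improving every estimate; this is the precise sense in which the bound holds ``by the same analysis''.
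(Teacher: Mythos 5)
You should first be aware that the paper does not prove this theorem at all: it is imported from Cerrai and Freidlin, and the paper's entire justification is the sentence preceding the statement, namely that the bound follows ``by the same analysis as Theorem~\ref{CF06}'' (the $\alpha=0$ case) because the noise coefficient $\nu^\alpha\le1$ can only improve matters. Your closing observation reproduces exactly that reasoning, and your reconstruction of the underlying Cerrai--Freidlin argument has the right skeleton: the failure of the direct energy estimate through the It\^o correction $\tfrac12\nu^{2\alpha-1}B_0$, the splitting $u^\nu=z^\nu+w^\nu$ into a linear stochastic part and a noise-free nonlinear part, the Lyapunov computation giving $\mathbb{E}\|z^\nu\|_1^2\le C\nu^{2\alpha}B_0$ while the velocity variance $\nu^{2\alpha-1}b_k/2$ is never used, and the pathwise energy estimate on $w^\nu$. (A minor point: the $L^\infty$ control of $z^\nu$ for $n\le3$ comes from $\mathbb{E}\|z^\nu\|_2^2\le C\nu^{2\alpha}B_1$ and $H^2\hookrightarrow L^\infty$, so it is the hypothesis $B_1<\infty$, not $B_0<\infty$, that is doing the work there.)

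As a self-contained proof, however, your proposal has a genuine gap, and you say so yourself: the Gronwall bound on $\max_t\mathcal{E}_w$ carries the factor $\exp\bigl(C\int_0^T(\|z^\nu\|_{L^\infty}^2+\|z^\nu\|_{L^\infty}^4)\,ds\bigr)$, and since $\mathbb{E}\exp\bigl(c\|G\|^4\bigr)=\infty$ for every $c>0$ and every nondegenerate Gaussian norm, taking expectations at that point fails outright; the smallness $\|z^\nu\|=\Ord{\nu^\alpha}$ does not rescue a quartic exponent for fixed $\nu$. Identifying the obstacle is not the same as removing it. The step can in fact be repaired by reorganising the absorption of the cubic cross terms: estimate $\int (w^\nu)^2(z^\nu)^4\le\epsilon\|w^\nu\|_{L^4}^4+C_\epsilon\|z^\nu\|_{L^8}^8$ by Young's inequality, so that the quartic power of $z^\nu$ appears only as an additive forcing term (which has finite Gaussian moments), while the Gronwall coefficient is reduced to $C(1+\|z^\nu\|_{L^\infty}^2)$; Fernique's theorem then applies to the exponential of the quadratic quantity, and here the smallness $\|z^\nu\|_{L^\infty}^2=\Ord{\nu^{2\alpha}}$ genuinely helps to meet Fernique's small-constant requirement when $\alpha>0$ and $\nu$ is small. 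Without some such rearrangement (or a stopping-time argument as in \cite{CF06}), your write-up is a correct road map with the decisive estimate left open rather than a proof.
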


We also apply the method in Section~\ref{sec:sqrt-nu}. Make the
following scaling transformation $\tilde{u}^\nu=\nu^{1/2-\alpha} u^\nu$ and $\tilde{v}^\nu=\nu^{1/2-\alpha} v^\nu$\,.
Then
\begin{eqnarray*}
d \tilde{u}^\nu&=&\tilde{v}^\nu dt\,,\\
d\tilde{v}^\nu&=&-\frac{1}{\nu}\left[\tilde{v}^\nu- \Delta
\tilde{u}^\nu-\nu^{1/2-\alpha}f\left(\frac{\tilde{u}^\nu}{\nu^{1/2-\alpha}}
\right)\right] dt+ \frac{1}{\sqrt{\nu}} \, dW(t),\\
\tilde{u}^\nu(0)&=&\nu^{1/2-\alpha}u_0\,,\quad \tilde{v}^\nu(0)=\nu^{1/2-\alpha}u_1\,.
\end{eqnarray*}
By a direct energy estimate or the scaling transformation and
Theorem~\ref{cfthm} we deduce the following theorem.
\begin{theorem}
Assume $B_1<\infty$\,. For any $T>0$\,, there is a positive constant~$C_T$  such that
\begin{equation*}
\mathbb{E}\left[ \max_{0\leq t\leq
T}\|\tilde{u}^\nu(t)\|_2^2+\max_{0\leq t\leq T}\|
\tilde{v}^\nu(t)\|^2_0\right]\leq C_T \,,
\end{equation*}
and for any integer $m>0$
\begin{equation*}
\mathbb{E}\int_0^T\|\tilde{u}^\nu(t)\|_1^{2m}dt\leq C_T\,.
\end{equation*}
Moreover, the distribution of~$\tilde{u}^\nu$ is tight in space~$C(0,
T; H_0^1(D))$.
\end{theorem}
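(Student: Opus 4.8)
The plan is to exploit the fact that, under the scaling $\tilde u^\nu=\nu^{1/2-\alpha}u^\nu$ and $\tilde v^\nu=\nu^{1/2-\alpha}v^\nu$, the displayed system is of \emph{exactly} the slow--fast form treated in Section~\ref{sec:sqrt-nu} for $\alpha=1/2$: the noise still enters the fast equation with the coefficient $1/\sqrt{\nu}$, and only the nonlinearity changes, becoming $\tilde f_\nu(\tilde u):=\nu^{1/2-\alpha}f\!\left(\tilde u/\nu^{1/2-\alpha}\right)=\beta\tilde u-\nu^{2\alpha-1}\tilde u^3$. Consequently the proof of Theorem~\ref{thm:estimate} should transfer almost verbatim once the behaviour of $\tilde f_\nu$ is understood, and I would organise the argument in the same three stages: an $H^2$ energy bound for $\tilde u^\nu$, a mild-solution bound for $\tilde v^\nu$ in $L^2(D)$, and finally tightness through Lemma~\ref{embedding}. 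Throughout, $0<\alpha<1/2$ gives $\nu^{1/2-\alpha}\le 1$ and $\nu^{1-2\alpha}\le1$ for $\nu\in(0,1]$, which keeps every constant independent of $\nu$.

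For the first bound I would run the higher-order energy estimate on the functional $\tfrac12\|\tilde u^\nu\|_2^2+\tfrac{\nu}{2}\|\tilde v^\nu\|_1^2$ via It\^o's formula. The decisive point is that the contributions of the cubic part of $\tilde f_\nu$ all carry the \emph{dissipative} sign: the potential $\tfrac14\nu^{2\alpha-1}\|\tilde u^\nu\|_{L^4}^4$ and the first-order term $3\nu^{2\alpha-1}\int_D(\tilde u^\nu)^2|\nabla\tilde u^\nu|^2\,dx$ both appear with a $+$ sign, so the large coefficient $\nu^{2\alpha-1}$ only strengthens the estimate rather than threatening it. The initial energy is $\nu$-uniformly bounded because $\tilde u^\nu(0)=\nu^{1/2-\alpha}u_0$ and $\tilde v^\nu(0)=\nu^{1/2-\alpha}u_1$, while the noise produces a trace term $\tfrac12 B_1\,t$ (the $1/\sqrt{\nu}$ prefactor cancelling the $\nu$ weight) together with a martingale controllable by a maximal martingale inequality; this is precisely where $B_1<\infty$ is used. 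This delivers $\mathbb{E}\big[\max_{0\le t\le T}\|\tilde u^\nu(t)\|_2^2\big]\le C_T$ with $C_T$ independent of $\nu$, and, again thanks to the cubic dissipation, all higher moments.

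The $L^2(D)$ bound on $\tilde v^\nu$ I would obtain from the variation-of-constants representation
\[
\tilde v^\nu(t)=e^{-t/\nu}\tilde v^\nu(0)+\frac1\nu\int_0^t e^{-(t-s)/\nu}\big[\Delta\tilde u^\nu(s)+\tilde f_\nu(\tilde u^\nu(s))\big]\,ds+\frac1{\sqrt{\nu}}\int_0^t e^{-(t-s)/\nu}\,dW(s),
\]
exactly as for $v^\nu$ in the proof of Theorem~\ref{thm:estimate}. The first term is bounded by $\|\tilde v^\nu(0)\|_0\le\|u_1\|_0$; in the second term $\tfrac1\nu\int_0^t e^{-(t-s)/\nu}\,ds\le1$, $\|\Delta\tilde u^\nu\|_0=\|\tilde u^\nu\|_2$ is already controlled, and the potentially dangerous nonlinear contribution is tamed by the scaling identity $\tilde f_\nu(\tilde u^\nu)=\nu^{1/2-\alpha}f(u^\nu)$, which \emph{removes} the blowing-up coefficient and reduces it to $\|f(u^\nu)\|_0$, bounded uniformly in $\nu$ by Theorem~\ref{cfthm} and the Sobolev embedding $H_0^1(D)\hookrightarrow L^6(D)$ valid for $n\le3$; the stochastic-convolution term is estimated by the maximal inequality \cite[Lemma 7.2]{PZ92}, the factor $1/\sqrt{\nu}$ combining with the exponential kernel to give a $\nu$-uniform bound proportional to $\mathrm{tr}\,Q=B_0\le B_1/\lambda_1$. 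This yields $\mathbb{E}\big[\max_{0\le t\le T}\|\tilde v^\nu(t)\|_0^2\big]\le C_T$. The space--time bound $\mathbb{E}\int_0^T\|\tilde u^\nu\|_1^{2m}\,dt\le C_T$ then follows from $\|\tilde u^\nu\|_1=\nu^{1/2-\alpha}\|u^\nu\|_1\le\|u^\nu\|_1$ together with the $2m$-moment version of Theorem~\ref{cfthm} and the Poincar\'e inequality, just as in Theorem~\ref{thm:estimate}.

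Finally, for tightness the two maximal bounds give, via Chebyshev's inequality, that for every $\eta>0$ there is $R$ with $\mathbb{P}\big(\sup_{[0,T]}\|\tilde u^\nu\|_2\le R,\ \sup_{[0,T]}\|\tilde v^\nu\|_0\le R\big)\ge1-\eta$ uniformly in $\nu$. On this event $\tilde u^\nu$ is bounded in $L^\infty(0,T;H^2(D)\cap H_0^1(D))$ and $\partial_t\tilde u^\nu=\tilde v^\nu$ is bounded in $L^\infty(0,T;L^2(D))$, so Lemma~\ref{embedding} applies with $E_1=H^2(D)\cap H_0^1(D)\Subset E_0=L^2(D)$, $E=H_0^1(D)$, $\theta=\tfrac12$ and $p_0=p_1=\infty$ (for which $1-\theta=\tfrac12>0=1/p_\theta$), giving relative compactness of the paths in $C(0,T;H_0^1(D))$; Prokhorov's theorem then yields tightness of $\{\mathcal{L}(\tilde u^\nu)\}_{0<\nu\le1}$ in $C(0,T;H_0^1(D))$. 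I expect the main obstacle to lie in the second stage's nonlinear term: one must resist bounding the cubic directly in the scaled variables, where the coefficient $\nu^{2\alpha-1}\to\infty$, and instead use the favorable energy sign in the first stage and the scaling identity $\tilde f_\nu(\tilde u^\nu)=\nu^{1/2-\alpha}f(u^\nu)$ in the second, so that every constant remains independent of~$\nu$.
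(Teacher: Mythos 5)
Your proposal is correct and follows essentially the same route as the paper, which disposes of this theorem in one line (``by a direct energy estimate or the scaling transformation and Theorem~\ref{cfthm}'') and implicitly relies on exactly the three ingredients you spell out: the $H^2$ energy estimate in the style of Theorem~\ref{thm:estimate}, the variation-of-constants bound on $\tilde v^\nu$ with the stochastic-convolution maximal inequality, and Lemma~\ref{embedding} plus Prokhorov for tightness. Your key observation --- that the scaling identity $\nu^{1/2-\alpha}f(\tilde u^\nu/\nu^{1/2-\alpha})=\nu^{1/2-\alpha}f(u^\nu)$ neutralises the apparently singular coefficient $\nu^{2\alpha-1}$ --- is precisely the ``scaling transformation and Theorem~\ref{cfthm}'' device the authors invoke, so no further comparison is needed.
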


Then we can follow the same discussion of Section~\ref{sec:alpha=0}
and have the following result.
\begin{theorem}
Assume $B_1<\infty$ and $0<\alpha<1/2$\,. For small $\nu>0$\,, there is
a new probability space~$(\bar{\Omega}, \bar{\mathcal{F}},
\bar{\mathbb{P}})$ which is an extension of the original probability
space~$(\Omega, \mathcal{F}, \mathbb{P})$ such that for any $T>0$\,,
the solution $u^\nu$ to~(\ref{5-SWE1})--(\ref{5-SWE3}) is
approximated by~$\bar{u}^\nu$ which solves
\begin{equation}\label{e:5-bar-u}
d\bar{u}^\nu=[\Delta\bar{u}^\nu+f(\bar{u}^\nu)]dt+\nu^\alpha
d\bar{W}^Q, \quad  \bar{u}^\nu(0)=u_0\,,
\end{equation} 
to an error of~$\ord{\nu^\alpha}$, in the space~$C(0, T; H_0^1(D))$ for
almost all $\omega\in\bar{\Omega}$.
\end{theorem}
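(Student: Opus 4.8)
The plan is to reproduce, with the scaling exponent $1/2-\alpha$ replacing $1/2$, the averaging argument already carried out for $\alpha=0$ in Section~\ref{sec:alpha=0}. The decisive point is that the scaling $\tilde u^\nu=\nu^{1/2-\alpha}u^\nu$, $\tilde v^\nu=\nu^{1/2-\alpha}v^\nu$ is tuned precisely so that the noise in the fast equation for $\tilde v^\nu$ again carries the coefficient $1/\sqrt\nu$, exactly as in the benchmark case $\alpha=1/2$. Consequently the slow--fast system for $(\tilde u^\nu,\tilde v^\nu)$ has the same structural form, the a priori bounds and tightness of the theorem immediately preceding this one are available, and the whole machinery of Sections~\ref{sec:sqrt-nu}--\ref{sec:alpha=0} transfers; only the bookkeeping of powers of $\nu$ changes when the scaling is finally undone.

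Concretely, first I would test against $\phi\in C_0^\infty(D)$ and write the integrated equation for $\tilde u^{\nu,\phi}=\langle\tilde u^\nu,\phi\rangle$, isolating a remainder martingale $\tilde{\mathcal M}_t^{\nu,\phi}$ with prefactor $\sqrt\nu$, precisely as in~(\ref{e:tilde-u}). The fast \spde~(\ref{e:v-tilde}) with frozen slow variable has, for each fixed $\nu$, a unique exponentially mixing stationary solution whose law is the Gaussian $\mathcal N(\Delta\tilde u+\nu^{1/2-\alpha}f(\tilde u/\nu^{1/2-\alpha}),Q/2)$; this pins down the conditional average and forces the quadratic covariation of the limiting martingale to be $\langle Q\phi,\phi\rangle t$, identical to the $\alpha=1/2$ computation. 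Extracting a limit point $\tilde{\mathcal M}_t^\phi$ of $\{\mathcal L(\tilde{\mathcal M}_t^{\nu,\phi})\}$ and verifying, through a conditioning argument against a bounded continuous test functional (using that the boundary term analogous to $R^{\nu,\phi}$ is lower order and is annihilated by its $\sqrt\nu$ prefactor), that $\tilde{\mathcal M}_t^\phi$ is a square-integrable martingale with covariation $\langle Q\phi,\phi\rangle t$, I would then invoke the representation theorem for martingales to realise it, on an extended probability space, as $\sqrt Q\,\langle\tilde W,\phi\rangle$ for a cylindrical Wiener process $\tilde W$.

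The final step is to divide through by $\nu^{1/2-\alpha}$ so as to return to $u^\nu=\tilde u^\nu/\nu^{1/2-\alpha}$. The gradient contribution $\nu^{-(1/2-\alpha)}\langle\nabla\tilde u^\nu,\nabla\phi\rangle$ recovers $\langle\nabla u^\nu,\nabla\phi\rangle$, the drift recovers $\langle f(u^\nu),\phi\rangle$, and the martingale prefactor becomes $\sqrt\nu/\nu^{1/2-\alpha}=\nu^\alpha$, so that in the sense of distribution one obtains exactly the effective equation~(\ref{e:5-bar-u}) with noise strength $\nu^\alpha$ and a discarded remainder of order $\ord{\nu^\alpha}$.

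The main obstacle will be the nonlinearity under this singular scaling. Since $\nu^{1/2-\alpha}f(\tilde u/\nu^{1/2-\alpha})=\beta\tilde u-\nu^{2\alpha-1}\tilde u^3$ with $\nu^{2\alpha-1}\to\infty$ for $0<\alpha<1/2$, the cubic term appears to blow up, and one must resist analysing the scaled equation in isolation. The resolution is that this divergence is a pure artefact of the scaling---in the unscaled variable the term is simply $\nu^{1/2-\alpha}f(u^\nu)$---so it is the \emph{uniform-in-$\nu$} a priori estimates of the preceding theorem, combined with the Sobolev embedding valid for $1\le n\le 3$, that genuinely control it and guarantee both the tightness used above and the passage to the limit with error $\ord{\nu^\alpha}$.
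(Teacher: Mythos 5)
Your proposal follows essentially the same route as the paper: the scaling $\tilde u^\nu=\nu^{1/2-\alpha}u^\nu$, $\tilde v^\nu=\nu^{1/2-\alpha}v^\nu$ to normalise the noise to $\nu^{-1/2}\,dW$, the a priori estimates and tightness carried over from Theorem~\ref{cfthm}, the martingale remainder and representation theorem exactly as in Sections~\ref{sec:sqrt-nu}--\ref{sec:alpha=0}, and finally undoing the scaling to obtain noise strength $\nu^\alpha$ and error $\ord{\nu^\alpha}$. Your closing remark on the apparent blow-up of the scaled cubic term $\nu^{2\alpha-1}\tilde u^3$ and its resolution via the uniform-in-$\nu$ estimates on the unscaled solution is a point the paper passes over silently, but it does not change the argument's structure.
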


\section{A stochastic slow manifold compares the SPDEs for the case of $\alpha=0$}

\label{ssmcf}

This section shows the long time effectiveness of the averaged model by comparing it to the original via their stochastic slow manifolds.

 We compare the \spde~\eqref{4-SWE1} and its model \spde~\eqref{e:4-bar-u} in a parameter regime where both have an accessible stochastic slow manifold.
Consider the \spde~\eqref{4-SWE1} restricted to one spatial dimension as
\begin{equation}
\nu u_{tt}+u_t=u_{xx}+f(u)+\sigma\dot W
\quad\text{where}\quad
f=(1+\beta')u-u^3.
\label{eq:sde}
\end{equation}
Consider this \spde\ on the non-dimensional domain $D=(0,\pi)$ with boundary conditions $u=0$ on $x=0,\pi$\,.  The parameter~$\sigma$ here explicitly measures the overall size of the Q-Wiener process~$W(t)$ which by~\eqref{Q} is finite. The small parameter~$\beta'$ measures the distance from the stochastic bifurcation that occurs near $\beta'=0$\,.  In this domain there will be a stochastic slow manifold of the \spde~\eqref{eq:sde} that matches the slow dynamics in the approximating \spde~\eqref{e:4-bar-u}.  This section compares the stochastic slow manifolds.

The \spde~\eqref{eq:sde} has a technically challenging spectrum. However, the construction of its stochastic slow manifold is easiest by embedding the \spde~\eqref{eq:sde} as the $\gamma=1$ case of the following slow-fast system of \spde{}s
\begin{align}
u_t={}&u_{xx}+u+v\,,
\label{eq:u}
\\
\nu v_t={}& -v -\gamma \nu\left( \partial _{xx}+1\right)u_t
+\beta' u-u^3
+\sigma\dot W\,.
\label{eq:v}
\end{align}
The parameter~$\gamma$ controls the homotopy: from a tractable base when $\gamma=0$ as then all linear modes in the very fast $v$~equation~\eqref{eq:v} decay at the same rate~$1/\nu$ (and the slow $u$~modes of~$\sin kx$ have decay rates~$1-k^2$); to the original \spde~\eqref{eq:sde} when $\gamma=1$ (upon eliminating~$v$).

\paragraph{A stochastic slow manifold appears}
On the non-dimensional interval~$(0,\pi)$, with Dirichlet boundary conditions on~$u$, the eigenmodes must be proportional to~$\sin kx$ for integer wavenumber~$k$.  Neglecting noise temporarily, $\sigma=0$ in this sentence, for all~$\nu<1$ and all homotopy parameter~$0\leq\gamma\leq1$ there is one zero eigenvalue and all the rest of the eigenvalues have negative real part; the slow subspace corresponding to the neutral mode is spanned by $(u,v)\propto (\sin x,0)$ (local in $(u,v,\sigma)$, but global in $\nu$~and~$\gamma$).  By stochastic center manifold theory~\cite{Arnold03, Boxler89}, and supported by stochastic normal form transformations~\cite{Arnold98, Roberts06k, Roberts07d}, when the noise spectrum truncates and the nonlinearity is small enough, the dynamics of the \spde{}s~\eqref{eq:u}--\eqref{eq:v} are essentially finite dimensional and a stochastic slow manifold exists which is exponentially quickly attractive to all nearby trajectories.

\paragraph{Computer algebra constructs the stochastic slow manifold}
We seek the stochastic slow manifold as a systematic perturbation of the slow subspace $u=a\sin x$\,.  The intricate algebra necessary to handle the multitude of nonlinear noise interactions is best left to a computer~\cite[e.g.]{Roberts05c, Roberts07d}.  However, the following expressions may be checked by substituting into the governing \spde{}s~\eqref{eq:u}--\eqref{eq:v} and confirming the order of the residuals is as small as quoted---albeit tedious, this check is considerably easier than the derivation.  The evolution on the stochastic slow manifold may be written
\begin{align}
\dot a={}& \beta' a-\rat34a^3
+\left[ 1-2\nu\beta'
+\rat92\nu a^2
-\rat9{1024}a^4
\right]b_1\dot w_1
\nonumber\\&{}
+\left[(\rat3{32}+\rat3{128}\beta')a^2
-\rat{21}{1024}a^4\right]b_3\dot w_3
+\rat5{1024}a^4b_5\dot w_5
+\ord{\nu^2+{\beta'}^2+a^4,\sigma}
\label{eq:ssme}
\end{align}
The  stochastic slow manifold itself involves Ornstein--Uhlenbeck processes written as convolutions over the past history of the noise processes: define $\Z\mu\dot w=\int_{-\infty}^t\exp[-\mu(t-s)]dw_s$ for decay rates $\mu_k=k^2-1$ characteristic of the $k$th~mode.  Then the stochastic slow manifold is
\begin{align}
u={}& a\sin x +\rat1{32}a^3\sin3x
-\rat3{32}a^2\left[b_3\Z8w_3\sin x+b_1\Z8w_1\sin 3x\right]
 \nonumber\\&{}
  +\sum_{k\geq2}b_k
 \left[1+\mu_k\nu+\gamma\nu(\mu_k-\mu_k^2\Z{\mu_k})\right]
 \Z{\mu_k}\dot w_k\sin kx
 \nonumber\\&{}
  -\sum_{k\geq1}b_k\Z[/\nu]{}\dot w_k\sin kx
  +\beta'\sum_{k\geq2}b_k\Z{\mu_k}\Z{\mu_k}\dot w_k\sin kx
 \nonumber\\&{}
 +\rat34\sum_{k\geq2}\left\{
 b_{k+2}\Z{\mu_k}\Z{\mu_{k+2}}\dot w_{k+2}\sin kx
 -2b_k\Z{\mu_k}\Z{\mu_{k}}\dot w_{k}\sin kx
 \right.\nonumber\\&\left.\qquad{}
 +b_{k}\Z{\mu_{k+2}}\Z{\mu_{k}}\dot w_{k}\sin[(k+2)x]
 \right\}
+\Ord{\nu^2+{\beta'}^2+a^4,\sigma^2},
\label{eq:ssmu}
\end{align}
and a correspondingly complicated expression for the field~$v(x,t)$.
Observe that the slow \sde~\eqref{eq:ssme} does not contain any fast time convolutions from the Ornstein--Uhlenbeck processes: it would be incongruous to have such fast processes in a supposedly slow model.  We keep fast time convolutions out of the slow \sde~\eqref{eq:ssme} by introducing carefully crafted terms in the slow mode~$\sin x$ in the parametrization of the stochastic slow manifold~\eqref{eq:ssmu}: here the amplitude of the slow mode~$\sin x$ is approximately
$a-\rat3{32}a^2b_3\Z8w_3-b_1\Z[/\nu]{}\dot w_1$\,.  Other methods which do not adjust the slow mode either average over such adjustments and so are weak models, or invoke fast processes in the slow model.

Note that the homotopy parameter~$\gamma$ affects the stochastic slow manifold shape~\eqref{eq:ssmu}, but only weakly.  To this order the homotopy has no effect on the evolution on the stochastic slow manifold~\eqref{eq:ssme}.

\paragraph{Compare with SPDE \eqref{e:4-bar-u}}
The corresponding stochastic slow manifold of the \spde~\eqref{e:4-bar-u}, in this parameter regime, is straightforward to construct, via the web server~\cite{Roberts07d} for example.  For stochastic slow manifold $\bar u\approx \bar a\sin x$ one finds the corresponding slow \sde\
\begin{align}
\dot {\bar a}={}& \beta' \bar a-\rat34\bar a^3 
+\left[ 1-\rat9{1024}a^4
\right]\bar b_1\dot{\bar w}_1
\nonumber\\&{}
+\left[(\rat3{32}+\rat3{128}\beta')\bar a^2
-\rat{21}{1024}a^4\right]\bar b_3\dot{\bar w}_3
+\rat5{1024}a^4\bar b_5\dot{\bar w}_5
+\ord{{\beta'}^{2}+\bar a^4,\sigma}.
\label{eq:ssmm}
\end{align}
This slow \sde\ is symbolically identical with the \sde~\eqref{eq:ssme}, one just removes the overbars.  We conclude that these stochastic slow manifolds confirm the modeling of the \spde~\eqref{4-SWE1} by its model \spde~\eqref{e:4-bar-u}; at least in the regime of one space dimension with small amplitude~$a$, bifurcation parameter~$\beta'$, and finite truncation to the noise.


\paragraph{Acknowledgements}  The research was supported  by the
NSF of China grant  No.~10901083, Zijin star of Nanjing University of Science and Technology, and the ARC grant DP0988738.

\end{document}